\documentclass[article]{elsarticle}

\usepackage{color}
\definecolor{olga}{rgb}{0., 0., 0.0}
\definecolor{felipe}{rgb}{0., 0., 0.0}
\definecolor{damiano}{rgb}{0., 0., 0.0}
\definecolor{discarded}{rgb}{0.9098,0.1294,0.2078}

\newcommand{\om}[1]{\textcolor{olga}{#1}}

\usepackage{slashbox}

\usepackage{lipsum}
\usepackage{amsfonts}
\usepackage{graphicx}
\usepackage{epstopdf}
\usepackage{algorithmic}
\ifpdf
  \DeclareGraphicsExtensions{.eps,.pdf,.png,.jpg}
\else
  \DeclareGraphicsExtensions{.eps}
\fi

\usepackage[margin=1.in]{geometry}

\title{Fast reconstruction of 3D blood flows from Doppler ultrasound images and reduced models.}


\author[1]{Felipe Galarce\corref{cor1}} 
\author[1]{Jean-Frédéric Gerbeau}
\author[1]{Damiano Lombardi}
\author[1,2]{Olga Mula}

\cortext[cor1]{felipe.galarce-marin@inria.fr. 2 Rue Simone Iff 75589 Paris Cedex 12. Bureau A316.}
\address[1]{Centre de Recherche INRIA de Paris \& Laboratoire Jacques-Louis Lions, France.}
\address[2]{Paris-Dauphine University, PSL Research University, CNRS, UMR 7534, CEREMADE, France.}
 
\newtheorem{theorem}{Theorem} 
\newtheorem{lemma}[theorem]{Lemma}
\newtheorem{corollary}[theorem]{Corollary}
\newdefinition{remark}{Remark}
\newdefinition{definition}{Definition}
\newproof{proof}{Proof}
\newproof{pot}{Proof of Theorem \ref{thm2}}

\usepackage{amsopn}

\usepackage{mathtools}
\usepackage{subfigure}


\newcommand{\cD}{\ensuremath{\mathcal{D}}}

\newcommand{\cF}{\ensuremath{\mathcal{F}}}

\newcommand{\cH}{\ensuremath{\mathcal{H}}}

\newcommand{\cM}{\ensuremath{\mathcal{M}}}
\newcommand{\cN}{\ensuremath{\mathcal{N}}}
\newcommand{\cO}{\ensuremath{\mathcal{O}}}
\newcommand{\cP}{\ensuremath{\mathcal{P}}}

\newcommand{\cV}{\ensuremath{\mathcal{V}}}
\newcommand{\cW}{\ensuremath{\mathcal{W}}}


\newcommand{\bE}{\ensuremath{\mathbb{E}}}

\newcommand{\bG}{\ensuremath{\mathbb{G}}}

\newcommand{\bP}{\ensuremath{\mathbb{P}}}

\newcommand{\bR}{\ensuremath{\mathbb{R}}}


\newcommand{\rY}{\ensuremath{\mathrm{Y}}}

\def\[{\left[}
\def\]{\right]}
\def\<{\langle}
\def\>{\rangle}
\def\({\left(}
\def\){\right)}
\def\[{\left [}
\def\]{\right]}
\def\({\left(}
\def\){\right)}

\newcommand{\innerp}[2]{\< #1, #2 \>}
\newcommand{\norm}[1]{\Vert #1 \Vert}

\newcommand{\Vn}{\ensuremath{{V_n}}}
\newcommand{\Wm}{\ensuremath{{W_m}}}

\newcommand{\wc}{\ensuremath{\mathrm{wc}}}
\newcommand{\ms}{\ensuremath{\mathrm{ms}}}
\newcommand{\av}{\ensuremath{\mathrm{av}}}

\newcommand{\pbdw}{\ensuremath{\text{(pbdw)}}}
\newcommand{\wcpbdw}{\ensuremath{\text{(wc, pbdw)}}}
\newcommand{\mspbdw}{\ensuremath{\text{(ms, pbdw)}}}
\newcommand{\wcaff}{\ensuremath{\text{(wc, aff)}}}
\newcommand{\msaff}{\ensuremath{\text{(ms, aff)}}}

\newcommand{\wcaffk}{\ensuremath{\text{(wc, aff, k)}}}

\newcommand{\msaffk}{\ensuremath{\text{(ms, aff, k)}}}

\newcommand{\aff}{\ensuremath{\text{(aff)}}}
\newcommand{\affpbdw}{\ensuremath{\text{(aff)}}}
\newcommand{\dd}{\ensuremath{\text{(dd)}}}
\newcommand{\wcdd}{\ensuremath{\text{(wc, dd)}}}

\newcommand{\train}{\ensuremath{\text{train}}}
\newcommand{\test}{\ensuremath{\text{test}}}

\newcommand{\healthy}{\ensuremath{\text{healthy}}}
\newcommand{\sick}{\ensuremath{\text{sick}}}

\newcommand{\ord}{\ensuremath{\mathcal{O}}}


\newcommand{\dist}{\operatorname{dist}}






\DeclareMathOperator*{\argmax}{arg\,max}
\DeclareMathOperator*{\argmin}{arg\,min}


\newcommand{\dt}{\ensuremath{\mathrm dt}}
\DeclareMathOperator*{\vspan}{span}

\newcommand{\eps}{\ensuremath{\varepsilon}}

\newcommand{\pl}{\emph{POD-lin}}
\newcommand{\ppa}{\emph{P-POD-aff}}
\newcommand{\pga}{\emph{P-Greedy-aff}}
\newcommand{\pdba}{\emph{P-DB-aff}}

\usepackage{soul}

\begin{document}

\begin{abstract}
This paper deals with the problem of building fast and reliable 3D reconstruction methods for blood flows for which partial information is given by Doppler ultrasound measurements. This task is of interest in medicine since it could enrich the available information used in the diagnosis of certain diseases which is currently based essentially on the measurements coming from ultrasound devices. The fast reconstruction of the full flow can be performed with state estimation methods that have been introduced in recent years and that involve reduced order models. One simple and efficient strategy is the so-called Parametrized Background Data-Weak approach (PBDW, see \cite{MPPY2015}). It is a linear mapping that consists in a least squares fit between the measurement data and a linear reduced model to which a certain correction term is added. However, in the original approach, the reduced model is built \emph{a priori} and independently of the reconstruction task (typically with a proper orthogonal decomposition or a greedy algorithm). In this paper, we investigate the construction of other reduced spaces which are built to be better adapted to the reconstruction task and which result in mappings that are sometimes nonlinear. We compare the performance of the different algorithms on numerical experiments involving synthetic Doppler measurements. The results illustrate the superiority of the proposed alternatives to the classical linear PBDW approach.
\end{abstract}

\begin{keyword}
Flow reconstruction \sep State estimation \sep Model reduction \sep Inverse problems \sep Doppler ultrasound 
\end{keyword}

\maketitle


\section{Introduction}
\om{Developing artificial intelligence-driven tools to assist doctors in medical decisions and diagnosis requires to solve efficiently and in a reliable manner data assimilation and inverse problems from biomedical applications.} These problems share the following general common features, which will guide our subsequent developments:
\begin{itemize}
\item The available data is often corrupted by noise and obtained with medical imaging techniques, which have the advantage of being \emph{non-invasive}.
\item There may be morphological constraints that prevent from measuring at specific locations.
\item In some cases, the device may not be able to measure directly the desired quantity of interest (QoI), and a complex post-processing may be required to obtain an estimate of it.
\item Sometimes, the desired QoI is a prediction or a forecast ahead in time, which requires a specific treatment to be inferred.
\end{itemize}
\om{In this paper, we develop state estimation techniques which aim at taking the above features into account, and which involve reduced modeling of parametrized Partial Differential Equations (PDEs). As an illustrative example} of how the reconstruction techniques that we deploy address some of the above points, in this paper, we consider a haemodynamics inverse problem: the real-time reconstruction of the full 3D blood velocity field in an artery \om{from} Doppler ultrasound images taken on a restricted portion of the artery. This application is of interest in its own right since a real-time reconstruction of the full flow would enrich the available information for medical diagnosis.

\om{Haemodynamics forward and inverse problems is a wide multidisciplinary topic with a long history, and a complete overview of the numerous existing contributions would go beyond the scope of the present paper. However, as a brief summary of the state of the art for our application on haemodynamics reconstructions using medical images, we could start by citing  \cite{aderson1998}, where a method based on quadratures is used to estimate the velocity from Doppler ultrasound. In \cite{moireau2013,hu2007,caiazzo2017,muller2018,adib2016} sequential data estimation methods are used to provide a description of the haemodynamics in different portions of the vascular tree. A patient specific fluid-structure interaction simulation of the left ventricle is proposed in \cite{lassila2012} based on Magnetic Resonance Imaging data. The calibration of computational fluid mechanics simulations was also investigated in \cite{koltukluouglu2019}. A multi-fidelity approach and a Bayesian framework are detailed in \cite{perdikaris2016}.  Recently, some works have explored kernel and deep learning based approaches to perform data assimilation \cite{koeppl2018,huttunen2020,kissas2020machine}.}  

%

In this work, we use methods based on reduced-order modelling to obtain reconstructions in close to real-time. This approach has attracted considerable attention and numerous mathematical developments and applications have been developed, see \cite{khalil2007linear, astrid2008missing,buffoni2008non, leroux2013application, MM2013, MMPY2015, raiola2015piv, ABGMM2017, KGV2018,CDDFMN2019}. These methods share connections with other well-known approaches such as 3D and 4D-Var (see \cite{Lorenc1981,LT1986}) or the Partial Spline Model (\cite[Chapter 9]{Wahba1990}). We refer to  \cite{Taddei2017, KGV2018} for further details on this point. \om{Regarding inverse haemodynamics problems, prior contributions involving reduced-order modeling are, e.g., \cite{lassila2011,pant2017}.}

Our starting point is the Parametrized Background Data-Weak originally introduced in \cite{MPPY2015}. The method consists in a linear mapping which is built from a least squares fit between the measurements and a given reduced space and an additional term that is capable of correcting model bias to some extent. In the original approach of PBDW, the reduced model is built \emph{a priori} and independently of the reconstruction task (typically with a Proper Orthogonal Decomposition or a greedy algorithm). In this paper, we investigate the construction of other reduced spaces which are built to be better adapted to the reconstruction task and which result in mappings that are sometimes nonlinear. We compare the performance of the different algorithms in the context of the fast reconstruction of the velocity field in an artery. The results illustrate the superiority of the proposed alternatives to the classical approach involving linear reduced bases.
 
The paper is organized as follows, in section \ref{sec:RecMeth} we describe mathematically what we understand by state estimation problems and reconstruction algorithms (section \ref{sec:SE}). We next introduce several ways of building a reduced order space, some of which take the measurements into account, henceforth leading to non-linear reconstruction methods (section \ref{sec:algos}). \om{We give details on the practical implementation and computational costs in section \ref{sec:impl}.} We next apply the methodology to the reconstruction of 3D blood flows from Doppler images (section \ref{sec:numerical_example}). The example is semi-realistic, meant to be a first (still idealized) step towards realistic applications. The main assumptions are the following. First, the Doppler images are noiseless and synthetically computed but they have been generated with a realistic model with respect to the real involved physics and measurement devices. This assumption is mainly due to our lack of real Doppler measurements and also because of the very involved space-time structure of the noise in Doppler images (which is a research topic in itself \cite{ledoux1997,bjaerum2002,demene2015}). Second, we assume that there is no model misfit even if the method can correct it to some extent. However, we use a rather realistic model for flows in large arteries based on incompressible Navier-Stokes equations, with boundary conditions used in haemodynamics simulations (see \cite{Formagia_Cardiovascular_Mathematics}). The precise parametric PDE model upon which we build our reduced models is explained in section \ref{subec:model}. Section \ref{subsec:measurements} gives some details on Doppler ultrasound imaging and the way in which we have incorporated it to our methodology. Finally, sections \ref{subsec:firstTest} and \ref{subsec:secondTest} present numerical results in two test cases. They illustrate the superiority of the proposed reconstruction algorithms with respect to the classical linear PBDW. In addition to this, our second example also shows that the method can be used to estimate quantities of interest which could be helpful in the detection of an arterial blockage.

We conclude this introduction by emphasizing that the assumptions in our computations with no model misfit and synthetic, noiseless measurements are not inherent limitations of the present approach. The focus lies rather on the reduced space construction \om{and the haemodynamics application}. The model misfit is automatically corrected and measurement noise can be added to the recostruction pipeline (see \cite{Taddei2017, ABGMM2017, GMMT2019} for works on measurement noise). We also emphasize the novelty that the current approach represents for the field of blood flow reconstruction. Indeed, estimations performed routinely by medical doctors usually do not involve full reconstructions (perhaps due to the lack of efficient techniques). They often reduce to simple pointwise estimations of the peak and average velocities of the imaged anatomical part. The development of more refined estimations of the velocity field are actually an active research topic, especially for flows in the heart cavities. The only fast technique that we are aware of is based solely on the divergence equation ($\nabla\cdot u = 0$) on the imaging plane \cite{ohtsuki2006flow,uejima2010new}. Despite its rapidity and simplicity, the contribution of the out-of-plane velocity and the momentum conservation are neglected, leading to very non-physical approximations. Another existing alternative are classical inverse problems in which a joint state-parameter estimation is performed. Their main drawback is the large computational times, which is incompatible to the clinical practice.

\section{Reconstruction methods}
\label{sec:RecMeth}

We start by introducing the state reconstruction methods that we use in the present work. We focus on explaining alternative ways of building reduced basis that are better tailored for the task of state estimation than the usual reduced bases. Our presentation is done for noiseless measurements and assumes that there is no model error. As already brought up, this is due to the fact that we lack from real measurements for our targeted application. 

\subsection{State estimation and recovery algorithms}
\label{sec:SE}
Let $\Omega$ be a domain of $\bR^d$ for a given dimension $d\geq 1$ and let $V$ be a Hilbert space defined over $\Omega$, with inner product $\<\cdot, \cdot\>$ and norm $\Vert \cdot \Vert$. Our goal is to recover an unknown function $u\in V$ from $m$ measurement observations
\begin{equation}
\label{eq:meas}
\ell_i(u),\quad i=1,\dots,m,
\end{equation}
where the $\ell_i$ are linearly independent linear forms over $V$. In many applications, each $\ell_i$ models a sensor device which is used to collect the measurement data $\ell_i(u)$. If the observations come in the form of an image as in the application of this paper, each $\ell_i$ may represent the response of the system in a given pixel. The Riesz representers of the $\ell_i$ are denoted by $\omega_i$ and 
span an $m$-dimensional space
$$
\Wm={\rm span}\{\omega_1,\dots,\omega_m\} \subset V.
$$
The observations $\ell_1(u),\dots, \ell_m(u)$ are thus equivalent to knowing the orthogonal projection
\begin{equation}
\omega =P_\Wm u.
\end{equation}
In this setting, the task of recovering $u$ from the measurement observation $\omega$ can be viewed as building a recovery algorithm
$$
A:\Wm\mapsto V
$$
such that $A(P_{\Wm}u)$ is a good approximation of $u$ in the sense that $\Vert u - A(P_{\Wm}u) \Vert$ is small.

Recovering $u$ from the measurements $P_\Wm u$ is a very ill-posed problem since there are infinitely many $v\in V$ such that $P_\Wm v = \omega$. It is thus necessary to add some a priori information on $u$ in order to recover the state up to a guaranteed accuracy.
We are motivated by the setting where $u$ is a solution to some parameter-dependent PDE of the general form
\begin{equation*}
\cP(u, y) = 0,
\end{equation*}
where $\cP$ is a differential operator and $y$ is a vector of parameters that describes some physical property and lives in a given set $\rY\subset \bR^p$. Therefore, our prior on $u$ is that it belongs to the set
\begin{equation}
\label{eq:manifold}
\cM \coloneqq \{ u(y) \in V \; :\; y\in\rY \},
\end{equation}
which is sometimes referred to as the {\em solution manifold}. The performance of a recovery mapping $A$ is usually quantified in two ways:
\begin{itemize}
\item If the sole prior information is that $u$ belongs to the manifold $\cM$, the performance is usually measured by the worst case reconstruction error
$$
E_{\wc}(A,\cM) = \sup_{u\in\cM} \Vert u - A(P_\Wm u) \Vert \, .
$$
\item In some cases $u$ is described by a probability distribution $p$ on $V$ supported on $\cM$. This distribution is itself induced by a probability distribution on $\rY$ that is assumed to be known. In this Bayesian-type setting, the performance is usually measured in an average sense through the mean-square error
$$
E^2_{\ms}(A,\cM) = \bE\left( \Vert u - A(P_\Wm u) \Vert^2\right) = \int_V \Vert u - A(P_\Wm u)\Vert^2 dp(u) \, ,
$$
and it naturally follows that $E_{\ms}(A,\cM)\leq E_{\wc}(A,\cM) $.
\end{itemize}

\subsection{Linear and nonlinear algorithms using reduced modeling}
\label{sec:algos}
Reduced models are a family of methods that produce each a hierarchy of spaces $(V_n)_{n\geq 1}$  that approximate the solution manifold well in the sense that
\begin{equation*}
\eps_n \coloneqq \sup_{u\in\cM} \dist(u, V_n)  \,
,\quad \text{or} \quad
\delta^2_n \coloneqq \bE\left(  \dist(u, V_n)^2\right) \, 
\end{equation*}
decays rapidly as $n$ grows for certain classes of PDEs. Several methods exist to build these spaces among which stand the reduced basis method (see \cite{RHP2007}), the (Generalized) Empirical Interpolation Method (see \cite{BMNP2004, MM2013, MMT2016}), Proper Orthogonal Decomposition (POD, \cite{sirovich1987,berkooz1993}) and low-rank methods (see \cite{CDS2011, CD2015acta}).

Linear reconstruction algorithms that make use of reduced spaces $V_n$ are the Generalized Empirical Interpolation Method (GEIM) introduced in \cite{MM2013} and further analyzed in \cite{MMPY2015, MMT2016} and the Parametrized Background Data-Weak Approach (PBDW) introduced in \cite{MPPY2015} and further analyzed in \cite{BCDDPW2017}. Note that some modified versions have been proposed to address measurement noise (see, e.g., \cite{ABGMM2017, Taddei2017}) and  other recovery algorithms involving reduced modelling have also been recently proposed (see \cite{KGV2018}). 

\subsubsection{PBDW, a linear recovery algorithm}
\label{sec:linearPBDW}
Given a measurement space $\Wm$ and a reduced model $V_n$ with $1\leq n\leq m$, the PBDW algorithm $$A^\pbdw_{m,n}:\Wm \to V$$ gives for any $\omega \in \Wm$ a solution of
\begin{equation*}
\min_{u \in \omega + W^\perp } \dist(u, V_n).
\end{equation*}
Denoting
\begin{equation}
\label{eq:infsup}
\beta(X,Y):=\inf_{x\in X}\sup_{y\in Y}\frac {\<x,y\>}{\|x\|\, \|y\|}=\inf_{x\in X}\frac {\|P_{Y} x\|}{\|x\|} \in [0,1]
\end{equation}
for any pair of closed subspaces $(X,Y)$ of $V$, the above optimization problem has a unique minimizer 
\begin{equation}
\label{eq:uStar}
A^\pbdw_{m,n}(\omega) = u_{m,n}^*(\omega) \coloneqq \argmin_{u = \omega + W^\perp } \dist(u, V_n).
\end{equation}
as soon as $n\leq m$ and $\beta(\Vn,\Wm)>0$. We adhere to these two assumptions in the following.

As proven in section \ref{appendix:linear-pbdw}, an explicit expression of $u_{m,n}^*(\omega)$ is
\begin{equation}
\label{eq:uStarExplicit}
u_{m,n}^*(\omega) = v^*_{m,n}(\omega) + \omega - P_\Wm v^*_{m,n}(\omega)
\end{equation}
with
\begin{equation}
\label{eq:vStarExplicit}
v^*_{m,n}(\omega) = \( P_{\Vn | \Wm} P_{\Wm | \Vn} \)^{-1} P_{\Vn | \Wm}(\omega),
\end{equation}
where, for any pair of closed subspaces $(X,Y)$ of $V$, $P_{X | Y}: Y \to X$ is the orthogonal projection into $X$ restricted to $Y$. The invertibility of the operator $P_{\Vn | \Wm} P_{\Wm | \Vn}$ is guaranteed under the above conditions.

Formula \eqref{eq:uStarExplicit} shows that $A^\pbdw_n$ is a bounded linear map from $\Wm$ to $\Vn\oplus(W_m\cap V_n^\perp)$. Depending on whether $\Vn$ is built to address the worst case or mean square error, the reconstruction performance is bounded by
\begin{equation}
\label{eq:err-wc-pbdw}
e_{m,n}^\wcpbdw = E_{\wc}(A^\pbdw_{m,n}, \cM) \leq \beta^{-1}(V_n, \Wm) \max_{u\in \cM}\dist(u, V_n\oplus(V_n^\perp\cap\Wm)) \leq \beta^{-1}(V_n, \Wm) \, \eps_n,
\end{equation}
or
\begin{align}
e_{m,n}^\mspbdw = E_{\ms}(A^\pbdw_{m,n}, \cM)
&\coloneqq \bE\left( \Vert u - A^{(\text{pbdw})}_n(P_\Wm u)\Vert^2\right)^{1/2} \nonumber\\
&\leq \beta^{-1}(V_n, \Wm) \bE\left(\dist(u, V_n\oplus(V_n^\perp\cap\Wm)^2\right)^{1/2}  \nonumber\\
&\leq \beta^{-1}(V_n, \Wm) \, \delta_n, \label{eq:err-ms-pbdw}
\end{align}
Note that $\beta(\Vn, \Wm)$ can be understood as a stability constant. It can also be interpreted as the cosine of the angle between $\Vn$ and $\Wm$. The error bounds involve the distance of $u$ to the space $V_n\oplus(V_n^\perp\cap\Wm)$ which provides slightly more accuracy than the reduced model $\Vn$ alone. In the following, to ease the reading we will write errors only with the second type of bounds that do not involve this correction part on $V_n^\perp\cap\Wm$.

An important observation is that for a fixed measurement space $W_m$ (which is the setting in our numerical tests), the errors $e_{m,n}^\wcpbdw$ and $e_{m,n}^\mspbdw$ reach a minimal value $e_{m,n^*_\wc}^\wcpbdw$ and $e_{m,n^*_\ms}^\mspbdw$ as the dimension $n$ varies from 1 to $m$. This behavior is due to the trade-off between the increase of the approximation properties of $V_n$ as $n$ grows and the degradation of the stability of the algorithm, given here by the decrease of $\beta(V_n, \Wm)$ to 0 as $n\to m$. As a result, the best reconstruction performance with  PBDW is given by
$$
e_{m,n^*_\wc}^\wcpbdw = \min_{1\leq n \leq m} e_{m,n}^\wcpbdw,
\quad\text{or}\quad
e_{m,n^*_\ms}^\mspbdw = \min_{1\leq n \leq m} e_{m,n}^\mspbdw.
$$

We finish this section with several remarks:
\begin{enumerate}
\item We do not consider measurement noise and model error. However, the PBDW algorithm can correct to some extent the model misfit (through the term $\eta^*$, see \ref{appendix:linear-pbdw}). Also, some extensions of the current setting have been proposed to address measurement noise (see \cite{Taddei2017, ABGMM2017, GMMT2019}).
\item \om{In our application, the manifold $\cM$ will be a family of incompressible fluid flow solutions of a parametric incompressible Navier-Stokes equation. In this case, $V_n$ is usually built such that all functions $v\in V_n$ satisfy the divergence-free condition $\nabla\cdot v=0$. As a result, a reconstruction with only $v^*_n$ (see \eqref{eq:vStarExplicit}) will yield a divergence-free approximation of the flux. Note however that  the full PBDW reconstruction $u^*_n$ from equation \eqref{eq:uStarExplicit} does not guarantee this property since the component $ \omega - P_\Wm v^*_{m,n}(\omega)$ may not be divergence free. The mass conservation of the reconstruction could be enforced, for instance, by considering its projection on divergence-free fields similar to the one used in fractional methods (like Chorin-Temam) for fluid flows.}
\item Note that in the present setting the measurement space $\Wm$ is fixed and we will adhere to this assumption in the rest of the paper. This is reasonable since usually the nature and location of the sensors is fixed in our application following the experience of the medical doctors. A different, yet related problem, would be to optimize the choice of the measurement space $\Wm$ \om{and we refer to \cite{MMT2016, BCMN2018, Ben, Ai,CK,YS} for works on this topic.}
\end{enumerate}

\subsubsection{Alternative reconstruction algorithms}
\label{sec:nonlinear}
Taking PBDW as a starting point, we next describe two different strategies to build recovery algorithms. They all incorporate an affine extension of PBDW that we explain next. One motivation to introduce it is because it can give more accuracy when the solution manifold $\cM$ is not localized near the origin. This typically happens when the state $u$ is a perturbation of a nominal state $\bar u \in V$.

\textbf{Affine PBDW:} Given an average or nominal state $\bar u \in V$, we can formulate the equivalent affine version of PBDW, which reads

\begin{equation}
\label{eq:aff-pbdw}
A_{m,n}^\affpbdw(\omega) \coloneqq \argmin_{u \,\in\, \omega + W^\perp } \dist(u, \bar u+ V_n),
\end{equation}
As shown in \ref{appendix:linear-pbdw}, this algorithm can also be written as
\begin{equation*}
A_{m,n}^\affpbdw(\omega) = \bar u + A_{m,n}^\pbdw(\omega-\bar \omega),
\end{equation*}
where $\bar \omega = P_{W_m} \bar u$ and $A_{m,n}^\pbdw$ is the linear PBDW algorithm of the previous section. We can easily see through the last expression that $A_{m,n}^\affpbdw$ is an affine mapping from $\Wm$ to $\bar u + V_n \oplus (\Wm\cap V_n^\perp)$. An interesting feature of this affine extension is that if $\omega=0$, the algorithm yields a nonzero reconstruction in $\Wm^\perp$ based on the nominal state $\bar u$, $A_{m,n}^\affpbdw(0)= \bar u - A_{m,n}^\pbdw(\bar \omega)$. This is in contrast to the linear version where $A_{m,n}^\pbdw(0)=0$.

Proceeding similarly as before, for a given $u\in\cM$, the error is bounded by (see \cite{CDDFMN2019})
\begin{align}
\Vert u - A_{m,n}^\affpbdw(\omega) \Vert
&\leq \beta^{-1}(\Vn, W) \dist(u, \bar u + \Vn\oplus(\Wm\cap\Vn^\perp))  \nonumber \\
&\leq \beta^{-1}(\Vn, W) \dist(u, \bar u + \Vn) \label{eq:uStarAffineBound}
\end{align}
and
\begin{equation}
\label{eq:err-wc-aff-pbdw}
e_{m,n}^\wcaff \coloneqq E_{\wc}(A^\affpbdw_{m,n}, \cM) \leq \beta^{-1}(V_n, \Wm) \, \eps^\affpbdw_n,
\end{equation}
or
\begin{equation}
e_{m,n}^\msaff \coloneqq E_{\ms}(A^\affpbdw_{m,n}, \cM) \coloneqq \bE\left( \Vert u - A^\affpbdw_{m,n}(P_\Wm u)\Vert^2\right)^{1/2}\leq \beta^{-1}(V_n, \Wm) \, \delta_n^\affpbdw,
\end{equation}
where
\begin{equation*}
\eps^\affpbdw_n \coloneqq \sup_{u\in\cM} \dist(u, \bar u + V_n)  \,
,\quad \text{or} \quad
(\delta^\affpbdw_n)^2 \coloneqq \bE\left(  \dist(u, \bar u + V_n)^2\right) \, 
\end{equation*}
In the following, to simplify notation, we will use $\eps_n$ and $\delta_n$ to denote either the error in the linear PBDW or its affine version since the reasoning and the estimates that will be derived next have the same form.

\textbf{Partition of $\cM$:} Our first strategy stems from the fact that we may have very few observations ($m$ small) and since we must have $n\leq m$ for reconstruction, we may not have enough approximation power in the interval where $n$ can range. Also, the approximation errors $(\eps_n)_n$ or $(\delta_n)_n$ provided by reduced basis may not always decrease rapidly to zero (the Kolmogorov $n$-width of $\cM$ may decrease slowly). However, the physical structure of the problem could give a natural decomposition of the manifold $\cM$
into different subdomains $\cM^{(k)}$ that are better adapted for model
reduction in the sense that the errors $(\eps^{(k)}_n)_n$ or
$(\delta^{(k)}_n)_n$ may decrease faster. Works based on that partition principle have been developed in the context of model reduction of forward problems \cite{carlberg2015adaptive,amsallem2016pebl,amsallem2012nonlinear,
peherstorfer2014localized}) and we propose to adapt it for data assimilation. A simple setting where we can decompose the manifold in our context is when we work in an application for which it is possible to know exactly a subset of entries in y for any target $u(y)$ during the online phase, 
say $\overline{y} \in \mathbb{R}^{\overline{p}}$ (with
$\overline{p}<p$). Given $\overline{y}$ and a data-base generated from
the governing PDE we may produce a disjoint union of $K$
subsets $\rY^{(k)}$ which yields a decomposition of $\cM$ into subsets
$\cM^{(k)}=u(\rY^{(k)})$. We can thus build reduced models
$(V_n^{(k)})_{n=1}$ for each subset $\cM^{(k)}$ and then reconstruct
with the linear or affine PBDW. 

Proceeding similarly as in the previous
section, the reconstruction performance on subset $\cM^{(k)}$ is

\begin{equation*}
e_{m,n}^\wcaffk = E_{\wc}(A^\aff_{m,n}, \cM^{(k)}) \leq \beta^{-1}(V_n^{(k)}, \Wm) \, \eps^{(k)}_n,
\end{equation*}
or
\begin{equation*}
e_{m,n}^\msaffk = E_{\ms}(A^\aff_{m,n}, \cM^{(k)}) \coloneqq \bE\left( \Vert u - A^\aff_{m,n}(P_\Wm u)\Vert^2\right)^{1/2}\leq \beta^{-1}(V_n^{(k)}, \Wm) \, \delta^{(k)}_n.
\end{equation*}
The best reconstruction performance for $\cM^{(k)}$ is thus 
$$
e_{m,n^*_\wc(k)}^\wcaffk = \min_{1\leq n \leq m} e_{m,n}^\wcaffk,
\quad\text{or}\quad
e_{m,n^*_\ms(k)}^\msaffk = \min_{1\leq n \leq m} e_{m,n}^\msaffk.
$$
It follows that the performance in $\cM=\cup_{k=1}^K \cM^{(k)}$ is
$$
e_{m}^\wcaff = \max_{1\leq n \leq m} e_{m,n^*_\ms(k)}^\wcaffk,
\quad\text{or}\quad
e_{m,n^*_\ms(k)}^\msaffk = \sum_{k=1}^K \omega_k e_{m,n^*_\ms(k)}^\msaffk,
$$
where $\omega_k = p( u\in \cM^{(k)} )$. 

\textbf{Data-based reduced models:} The second strategy is motivated by the observation that the reduced models $V_n$ or $V_n^{(k)}$ of the previous approaches are built independently of the given measurement space $W_m$ and also of the given measurement observation $\omega$. For a given $\omega\in \Wm$, we can build a data-driven $V_n(\omega)$ with an orthogonal matching pursuit greedy algorithm (OMP) that we explain next. Once this reduced model has been computed, we reconstruct with the data-driven affine version of PBDW,
\begin{equation}
\label{eq:uStarAffine}
A_{m,n}^\dd(\omega) \coloneqq \argmin_{u \,\in\, \omega + W^\perp } \dist(u, \bar u+ V_n(\omega)),
\end{equation}
where the difference with respect to \eqref{eq:aff-pbdw} is that now $\Vn$ depends on $\omega$. The reconstruction performance of this algorithm is bounded by
$$
e_{m,n}^\wcdd = 
E_\wc(A_{m,n}^\dd,\cM) \leq \sup_{u\in\cM}  \beta^{-1}(\Vn(P_\Wm u), W) \dist(u, \bar u + \Vn(P_\Wm u)),
$$
in the worst case setting. Similarly as before, $e_{m,n}^\wcdd$ reaches a minimum $e_{m,n^*_\dd}^\wcdd$ when $n$ varies from 1 to $m$. Since $V_n$ is now adapted to the measurement observations, we expect that the current algorithm performs better than its classical linear counterpart.

We now present the OMP greedy algorithm that we propose. Let 
$$
\cD \coloneqq \{ v = u / \Vert u \Vert \ :\ u \in \cM \}
$$
be the set of normalized functions from $\cM$.
If $\bar u =0$, the first element $\varphi_1$ is chosen as 

\begin{equation}
\label{eq:omp1}
\varphi_1= \frac{1}{\# \cD} \sum_{u \in \cD} u.
\end{equation}

For $n>1$, given $V_{n}=\mathrm{span}\{\varphi_1, \ldots, \varphi_n\}$, we select
\begin{equation}
\label{eq:omp2}
\varphi_{n+1}\in \argmax_{v\in \cD} \left | \left\< w - P_{P_{\Wm} V_n} w, \frac{P_{\Wm} v}{\|P_{\Wm} v\|} \right\> \right |
\end{equation}
where $P_{\Wm} V_n = \mathrm{span}\{P_{\Wm} \varphi_1,\ldots,P_{\Wm} \varphi_n\}$. We set $V_{n+1}=\mathrm{span}\{V_n, \varphi_{n+1}\}$.

Note that all operations in this algorithm are done in the space $\Wm$. Hence we can do all calculations in $\mathbb{R}^m$, which makes this algorithm be very fast since it does not involve  computations with functions from the whole space $V$ (see \ref{appendix:OMP} for further details on its implementation).

In the case $\bar u \neq 0$, we introduce $\bar \omega = P_\Wm \bar u$ and the shifted set
$$
\delta_{\bar u} \cD = \left\lbrace v = \frac{u - \bar u}{\Vert u - \bar u \Vert} \; : \; u \in \cM,\, u\neq \bar u \right\rbrace .
$$
Now it suffices to apply the previous greedy algorithm to the target function $\omega -\bar \omega$ instead of $w$ and do the search over $\delta_{\bar u} \cD$ instead of $\cD$.

\begin{remark}
Note that both nonlinear approaches are very different in nature: the first is based on localization and the second is purely data-driven. Their performance will thus be very dependent on the physical problem under consideration and the measurement setting: the nature of $\Wm$, the nature of the actual measurements $\omega=P_\Wm u$, the nature of the manifold and its alignment with $\Wm$ locally at $\omega$. Note also that both approaches can be combined and we can consider a piece-wise data-driven algorithm (its performance has actually been tested in the numerical tests below). 
\end{remark}

\section{\om{Computational details of the methods}}
\label{sec:impl}
\subsection{Discretizations}
In the previous section and for the sake of clarity, we have given an idealized description of PBDW and the other reconstruction methods where we do not mention certain discretization aspects that inevitably come into play. To start with, the snapshot functions $u\in \cM$ cannot be computed exactly but only up to some tolerance. One typical instance (which is used in our work) is the finite element method which gives an approximation $u_h\in V_h$ of $u\in V$ where $V_h$ is a finite element space with $\cN$ degrees of freedom. Therefore the computable states are  elements of the perturbed manifold
$$
\cM_h \coloneqq \{ u_h(y) \in V_h \, :\, y \in \rY \}.
$$
The computation of reduced models $V_n$ involves a finite training subset $\widetilde\cM_{\train} \subset \cM_h$ of snapshots, and we denote by $\#\widetilde\cM_{\train} $ its cardinality. As a consequence, all the reduced models are low dimensional subspaces of $V_h$. Note that the fact that the true states do not belong to $\cM_h$ can be interpreted as a model bias.

The methodology also requires the computation of the Riesz lifts $\omega_i$ in order to define the measurement space $W_m$. Since we work in the space $V_h$, these can be defined
as elements of this space satisfying
$$
\<\omega_i,v\>=\ell_i(v), \quad v\in V_h,
$$
thus resulting in a measurement space $W\subset V_h$.

In what follows, we keep our idealized discussion and do not systematically recall  that all computations take place in a background discretization space $V_h$.

\subsection{Algebraic formulation of PBDW}
\label{appendix:linear-pbdw}

Here we explicitly derive the algebraic formulation of $u_{m,n}^*(\omega)$, the function given by the linear PBDW algorithm. Let $X$ and $Y$ be two finite dimensional subspaces of $V$ and let
\begin{align*}
P_{X | Y} : Y &\to X \\
y &\mapsto P_{X | Y} (y)
\end{align*}
be the orthogonal projection into $X$ restricted to $Y$. That is, for any $y\in Y$, $P_{X | Y}(y)$ is the unique element $x\in X$ such that 
$$
\left< y - x, \tilde x\right>= 0,\quad  \forall \tilde x \in X.
$$

\begin{lemma}
Let $\Wm$ and $\Vn$ be an observation space and a reduced basis of dimension $n\leq m$ such that $\beta(\Vn, \Wm)>0$. Then the linear PBDW algorithm is given by
\begin{equation}
\label{eq:explicit-u}
u_n^*(\omega) = \omega + v^*_n - P_W v^*_n,
\end{equation}
with
\begin{equation}
\label{eq:explicit-v}
v^*_n = \( P_{V_n | \Wm} P_{\Wm | V_n} \)^{-1} P_{V_n | \Wm} (\omega).
\end{equation}
\end{lemma}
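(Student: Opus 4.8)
The plan is to treat the algorithm \eqref{eq:uStar} as a joint minimization over the two objects it implicitly contains, the recovered state $u$ and its best approximant in $\Vn$. Writing $u=\omega+\eta$ with $\eta\in W^\perp$, we have $\dist(u,\Vn)^2=\min_{v\in \Vn}\norm{\omega+\eta-v}^2$, so the problem reads $\min_{\eta\in W^\perp}\min_{v\in \Vn}\norm{\omega+\eta-v}^2$. The first move is to eliminate $\eta$ analytically. Since $\omega\in\Wm$, I would decompose each $v\in\Vn$ as $v=P_W v+(v-P_W v)$ and regroup the error as $\omega+\eta-v=(\omega-P_W v)+(\eta-(v-P_W v))$, where the first summand lies in $\Wm$ and the second in $W^\perp$. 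By orthogonality, $\norm{\omega+\eta-v}^2=\norm{\omega-P_W v}^2+\norm{\eta-(v-P_W v)}^2$, and for each fixed $v$ the choice $\eta=v-P_W v=P_{W^\perp}v$ annihilates the second term. This reduces everything to the least-squares problem $\min_{v\in\Vn}\norm{\omega-P_W v}^2$ and pins down the optimal shift $\eta^*=P_{W^\perp}v^*$.

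Second, I would write the first-order optimality condition for the reduced problem: the minimizer $v^*$ is characterized by $\innerp{\omega-P_W v^*}{P_W \tilde v}=0$ for all $\tilde v\in\Vn$. Because $\omega-P_W v^*\in\Wm$ is orthogonal to $W^\perp$, this is equivalent to $\innerp{\omega-P_W v^*}{\tilde v}=0$ for all $\tilde v\in\Vn$, i.e. $\omega-P_W v^*$ is orthogonal to $\Vn$. Rewriting with the restricted projections, using $P_{\Vn}\omega=P_{\Vn|\Wm}\omega$ (since $\omega\in\Wm$) and $P_W v^*=P_{\Wm|\Vn}v^*$ (since $v^*\in\Vn$), the condition becomes $P_{\Vn|\Wm}\,\omega=P_{\Vn|\Wm}P_{\Wm|\Vn}\,v^*$, which is exactly \eqref{eq:explicit-v} once the operator is inverted.

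The main obstacle, and the only place where the hypothesis $\beta(\Vn,\Wm)>0$ enters, is to justify that $T:=P_{\Vn|\Wm}P_{\Wm|\Vn}:\Vn\to\Vn$ is invertible. I would establish coercivity by the direct computation $\innerp{Tv}{v}=\innerp{P_{\Vn}P_W v}{v}=\innerp{P_W v}{v}=\norm{P_W v}^2$ for $v\in\Vn$, where the self-adjointness of $P_{\Vn}$ and the identity $P_{\Vn}v=v$ are used in the middle step. By the definition \eqref{eq:infsup} of $\beta$ one has $\norm{P_W v}^2\geq \beta(\Vn,\Wm)^2\norm{v}^2$, so $T$ is positive definite; on the finite-dimensional space $\Vn$ this forces $\ker T=\{0\}$ and hence invertibility, and the same coercivity yields strict convexity of $v\mapsto\norm{\omega-P_W v}^2$, so the minimizer is unique. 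Finally I would assemble the answer: the optimal state is $u_n^*=\omega+\eta^*=\omega+P_{W^\perp}v^*=\omega+v^*-P_W v^*$, which is \eqref{eq:explicit-u}.
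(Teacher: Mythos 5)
Your proof is correct and follows essentially the same route as the paper's: eliminate the $\Wm^\perp$-component to reduce to the least-squares problem $\min_{v\in \Vn}\norm{\omega-P_{\Wm}v}^2$, derive the normal equations $P_{\Vn|\Wm}P_{\Wm|\Vn}v^*=P_{\Vn|\Wm}\,\omega$ (you via first-order orthogonality, the paper via identifying the adjoint $P^*_{\Wm|\Vn}=P_{\Vn|\Wm}$, which amounts to the same computation), and invoke $\beta(\Vn,\Wm)>0$ for invertibility. Your explicit coercivity bound $\innerp{Tv}{v}=\norm{P_{\Wm}v}^2\geq \beta(\Vn,\Wm)^2\norm{v}^2$ merely spells out the invertibility and uniqueness step that the paper leaves implicit, so it is a welcome refinement rather than a different argument.
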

\begin{proof}
By formula \eqref{eq:uStar}, $u^*_n(\omega)$ is a minimizer of
\begin{align*}
\min_{u \in \omega + \Wm^\perp } \dist(u, V_n)^2
&= \min_{u \in \omega + \Wm^\perp } \min_{v\in V_n} \Vert u - v\Vert^2 \\
&= \min_{v\in V_n}  \min_{\eta \in \Wm^\perp} \Vert \omega + \eta - v \Vert^2 \\
&= \min_{v\in V_n} \Vert \omega -v - P_{\Wm^\perp}(w -v) \Vert^2 \\
&= \min_{v\in V_n} \Vert \omega -v + P_{\Wm^\perp}(v) \Vert^2 \\
&= \min_{v\in V_n} \Vert \omega - P_{\Wm}(v) \Vert^2 .
\end{align*}
The last minimization problem is a classical least squares optimization. Any minimizer $v^*_n\in V_n$ satisfies the normal equations
$$
P^*_{\Wm|V_n}  P_{\Wm|V_n} v^*_n = P^*_{\Wm|V_n} w,
$$
where $P^*_{\Wm|V_n} : V_n \to \Wm$ is the adjoint operator of $P_{\Wm|V_n}$. Note that $P^*_{\Wm|V_n}$ is well defined since $\beta(V_n, \Wm)= \min_{v\in V_n} \Vert P_{\Wm|V_n} v \Vert / \Vert v \Vert >0 $, which implies that $P_{\Wm|V_n}$ is injective and thus admits an adjoint. Furthermore, since for any $w \in \Wm$ and $v\in V_n$, $\< v, w \>=\< P_{\Wm|V_n} v , w \> = \< v , P_{V_n|\Wm}w \>$, it follows that $P^*_{\Wm|V_n} = P_{V_n|\Wm}$, which finally yields that the unique solution of the least squares problem is
$$
v^*_n = \( P_{V_n | \Wm} P_{\Wm | V_n} \)^{-1} P_{V_n | \Wm} w .
$$
Therefore $u^*_n = w + \eta^*_n = w + v^*_n - P_\Wm v^*_n$.
\end{proof}

As a direct consequence of this Lemma, we derive the following formulation for the affine PBDW algorithm.
\begin{corollary}
Let $\Wm$ and $\Vn$ be an observation space and a reduced basis of dimension $n\leq m$ such that $\beta(\Vn, \Wm)>0$. Then the affine PBDW algorithm with respect to a nominal state $\bar u$ is given by
\begin{equation}
A_{m,n}^\affpbdw(\omega) = \bar u + u^*_{m,n}(w-\bar w)
\end{equation}
where $\bar w = P_\Wm \bar u$ and $u^*_{m,n}$ is the reconstruction with the linear PBDW method.
\end{corollary}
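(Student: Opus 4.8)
The plan is to reduce the affine problem \eqref{eq:aff-pbdw} to the linear one \eqref{eq:uStar} by a single translation, so that the preceding Lemma can be invoked directly. Starting from the definition
$$
A_{m,n}^\affpbdw(\omega) = \argmin_{u \,\in\, \omega + W^\perp} \dist(u, \bar u + V_n),
$$
I would introduce the change of variables $w = u - \bar u$ and track how both the objective and the feasible set transform.

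For the objective, translating both a point and a set by the same vector leaves distances invariant, so $\dist(u, \bar u + V_n) = \dist(w + \bar u, \bar u + V_n) = \dist(w, V_n)$. Hence minimizing $\dist(u, \bar u + V_n)$ over $u$ is equivalent to minimizing $\dist(w, V_n)$ over $w$. The only real work lies in identifying the transformed feasible set: the constraint $u \in \omega + W^\perp$ becomes $w \in \omega - \bar u + W^\perp$. Here I would use the orthogonal decomposition $\bar u = \bar\omega + (\bar u - \bar\omega)$, where $\bar\omega = P_{\Wm}\bar u \in \Wm$ and $\bar u - \bar\omega \in W^\perp$. Since the $W^\perp$-component of $\bar u$ can be absorbed into the affine subspace $W^\perp$, this yields $\omega - \bar u + W^\perp = (\omega - \bar\omega) + W^\perp$, with the genuine shift $\omega - \bar\omega$ lying in $\Wm$. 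This is precisely the feasible set of the linear PBDW problem \eqref{eq:uStar} with measurement input $\omega - \bar\omega$.

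Consequently, the transformed problem $\min_{w \,\in\, (\omega-\bar\omega) + W^\perp} \dist(w, V_n)$ is exactly the linear PBDW problem, whose unique minimizer—guaranteed to exist by the Lemma under the standing hypotheses $n \leq m$ and $\beta(\Vn, \Wm) > 0$—is $w^* = u^*_{m,n}(\omega - \bar\omega)$. Reversing the substitution $u = w + \bar u$ then gives $A_{m,n}^\affpbdw(\omega) = \bar u + u^*_{m,n}(\omega - \bar\omega)$, as claimed.

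I expect the feasible-set computation to be the only step requiring genuine care: one must split $\bar u$ into its $\Wm$ and $W^\perp$ parts and verify that only the $\Wm$-part survives as a true shift of the measurement data, while the $W^\perp$-part is harmless because it merely reparametrizes the affine direction $W^\perp$. The translation invariance of the objective and the transfer of uniqueness from the linear to the affine problem are then immediate, since the change of variables is an affine bijection between the two feasible sets that preserves the objective value.
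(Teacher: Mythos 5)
Your proof is correct and takes essentially the same route as the paper: the paper's own (very terse) proof just shifts the minimization by $\bar u$ and says to re-run the linear Lemma's computation on $\min_{u \in \omega + \Wm^\perp}\min_{v\in V_n}\Vert u - \bar u - v\Vert^2$, which is precisely your change of variables $w = u - \bar u$ combined with the splitting $\bar u = \bar\omega + (\bar u - \bar\omega)$ into its $\Wm$- and $\Wm^\perp$-components. If anything, your write-up is more complete, since it makes explicit the feasible-set bookkeeping (that only $\bar\omega = P_{\Wm}\bar u$ survives as a shift of the data, the $\Wm^\perp$-part being absorbed) that the paper leaves implicit.
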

\begin{proof}
The result follows by following the same lines as in the linear case for the shifted minimization problem
\begin{align*}
\min_{u \in \omega + \Wm^\perp } \dist(u, \bar u + V_n)^2
&= \min_{u \in \omega + \Wm^\perp } \min_{v\in V_n} \Vert u - \bar u - v\Vert^2 \\
\end{align*}
\end{proof}

We next derive the symmetric linear system of equations to be solved in order to compute $v^*_n$ in expression \eqref{eq:explicit-v}. Let $F$ and $H$ be two finite-dimensional spaces of $V$ of dimensions $n$ and $m$ respectively in the Hilbert space $V$ and let $\cF=\{f_i\}_{i=1}^n$ and $\cH=\{h_i\}_{i=1}^m$ be a basis for each subspace respectively. The Gram matrix associated to $\cF$ and $\cH$ is
$$
\bG(\cF, \cH) = \left(  \left< f_i, h_j\right> \right)_{\substack{1\leq i \leq n \\ 1\leq j \leq m}}.
$$
These matrices are useful to express the orthogonal projection
$P_{ F | H}: H\mapsto F$ in the bases $\cF$ and $\cH$ in terms of the matrix
$$
\bP_{F | H} = \bG(\cF, \cF)^{-1} \bG(\cF, \cH).
$$
As a consequence, if $\cV_n = \{ v_i \}_{i=1}^n$ is a basis of the space $V_n$ and $\cW_m = \{\omega_i\}_{i=1}^m$ is the basis of $W_m$ formed by the Riesz representers of the linear functionals $\{\ell_i\}_{i=1}^m$, the coefficients $\textbf{v}^*_n$ of the function $v^*_n$ in the basis $\cV_n$ are the solution to the normal equations
\begin{equation}
\label{eq:normaleq}
\bP_{V_n | W_m} \bP_{W_m | V_n} 
\textbf{v}^*_n =  \bP_{V_n | W_m} \bG(\cW_m, \cW_m)^{-1} \textbf{w},
\end{equation}
where
$$
\bP_{V_n | W_m} = \bP_{V_n | W_m}^T
$$
since $P^*_{\Wm|V_n} = P_{\Wm|V_n}$ and $\textbf{w}$ is the vector of measurement observations
$$
\textbf{w} = (\left< u, \omega_i\right>)_{i=1}^m.
$$
Usually $\textbf{v}^*_n$ is computed with a QR decomposition or any other suitable method. Once $\textbf{v}^*_n$ is found, the vector of coefficients $\textbf{u}_n^*$ of $u^*_n$ easily follows.

\subsection{Practical implementation of OMP}
\label{appendix:OMP}

Introducing the set $\cW = \{ P_{\Wm} v \, : \, v \in \cD \}$, the recursive step of the OMP algorithm \eqref{eq:omp2} can equivalently written in terms of functions of $\Wm$ as follows. For $n>1$, given $V_{n}=\mathrm{span}\{\varphi_1, \ldots, \varphi_n\}$, we select (see equation \eqref{eq:omp2})
\begin{equation}
z_{n+1}\in \argmax_{z\in \cW} \left | \left\< \omega - P_{P_{\Wm}(V_n)} \omega, \frac{z}{\| z \|} \right\> \right |.
\end{equation}
For the chosen $z_{n+1}$, we take one of the corresponding functions $\varphi_{n+1}$ from $\cD$ that satisfy $P_{\Wm} \varphi_{n+1}=z_{n+1}$. We then set $V_{n+1}=\mathrm{span}\{V_n, \varphi_{n+1}\}$.
The computation of $P_{P_{\Wm} V_n} \omega$ is a straightforward solving of a linear problem. We look for the coefficients $c = (c_i)_{i=1}^n$ such that $P_{P_{\Wm} (V_n)} \omega = \sum_{i=1}^n c_i P_{\Wm} \varphi_i$. Since
\begin{equation}
  \innerp{P_{P_{W_m}(V_n)} \omega}{P_{W_m} \varphi_i} = \innerp{\omega}{P_{W_m} \varphi_i } ,\quad \forall i=1,\dots, n
  \label{eq:calculating_P_omega}
\end{equation}
it follows that
$$
\sum_{j=1}^n c_j \left< P_\Wm \varphi_i , P_\Wm \varphi_j \right> = \left< \omega, P_\Wm \varphi_i \right>,\quad 1\leq i \leq n,
$$
which is a linear system of the form
\begin{equation}
A^{\text{OMP}} c = g^{\text{OMP}}.
\label{eq:omp_offline}
\end{equation}
Denoting $\{ \rm w_1,\dots,w_m \}$ an orthonormal basis of $\Wm$, we have
$$
A^{\text{OMP}} = ( a_{i,j}  )_{1\leq i, j \leq n},\quad a_{i,j} = \left< P_\Wm \varphi_i , P_\Wm \varphi_j \right> = \sum_{k=1}^m \left< \rm{w}_k, \varphi_j\right> \left< \rm{w}_k, \varphi_i \right>
$$
and
$$
g^{\text{OMP}} = (g_i)_{i=1}^n,\quad g_i = \left < \omega, P_\Wm \varphi_i \right> = \sum_{k=1}^m \left< u, \rm{w}_k\right> \left< \varphi_i, \rm{w}_k \right>.
$$

\subsection{Summary of the methods investigated and their computational cost.}
\label{sec:summary-methods}
\paragraph{\textbf{Methods:}} The methods that we have implemented in our numerical tests and their computational cost are:
\begin{enumerate}
\item \emph{Linear PBDW (labelled \pl):} We build the linear spaces $V_n$ from a classical Proper Orthogonal Decomposition of the whole training set $\widetilde\cM_{\train}$. This is the classical PBDW approach explained in section \ref{sec:linearPBDW} and we consider it as the reference that our proposed methods should outperform. 

\item \emph{Nonlinear algorithm with manifold partitioning:} We partition the training set $\widetilde\cM_{\train}$ in a number of non-intersecting subsets (see section \ref{sec:nonlinear}). For each subset  $\widetilde \cM^{(k)}$ of the partition, we build in an offline phase a reduced model. Two constructions have been tested:
\begin{itemize}
\item \ppa: A Proper Orthogonal Decomposition on each partition $\widetilde \cM^{(k)}$. 
\item \pga: A greedy algorithm: for $n=1$, we set $V^{(k)}_1=\vspan\{u^{(k)}_1\}$ with
$$
u^{(k)}_1= \frac{1}{\# \widetilde\cM^{(k)}} \sum_{u \in \widetilde\cM^{(k)}} u.
$$

For $n>1$, we select 
\begin{equation}
\label{eq:pure-greedy}
u^{(k)}_n \in \argmax_{u\in \widetilde\cM^{(k)}} \Vert u - P_{V_{n-1}} u \Vert,
\end{equation}
and set $V^{(k)}_n = \vspan\{V^{(k)}_{n-1}, u^{(k)}_n\}$.

\end{itemize}

\item \pdba, \emph{Data-driven nonlinear algorithm with manifold partitioning:}  Since each ultrasound image can be seen as an observation $\omega\in\Wm$, we run the OMP algorithm of section \ref{sec:nonlinear} to build $V_n(\omega)$ and do the reconstruction. Note that the greedy search has to be done online since we need the knowledge of the measurement. To speed-up computations, instead of searching in the whole training set
$$
\delta_{\bar u} \widetilde \cD = \left\lbrace v = \frac{u - \bar u}{\Vert u - \bar u \Vert} \; : \; u \in \widetilde\cM_{\train} \right\rbrace ,
$$
we restrict the search to the partition
$$
\delta_{\bar u}\widetilde \cD^{(k)} = \left\lbrace v = \frac{u - \bar u}{\Vert u - \bar u \Vert} \; : \; u \in \widetilde\cM^{(k)} \right\rbrace .
$$
\end{enumerate}

\paragraph{\textbf{Computational costs:}} We outline the computational cost of the methods in Table \ref{tab:cost}. Note that all of them can be decomposed into an online and an offline phase. The cost of generating the snapshots database is of the same order in all methods, namely $\mathcal{O}(\mathcal{N}^\alpha \#\widetilde\cM_{\train})$ where $\alpha$ is the scaling behavior of the linear solver (usually $\alpha =2$ for iterative methods). The cost of building the reduced model in $\pl$ and $\ppa$ is the one to compute the SVD of the correlation matrix of the snapshots. For \pl, the matrix is of size $\#\widetilde\cM_{\train}^2$. For \ppa, we have to compute the SVD of $K$ matrices of size $(\#\widetilde\cM^{(k)})^2$ for $k=1\dots, K$. If these SVD computations are done in parallel, we can obtain important time reductions compared to \pl~since in general $\#\widetilde\cM^{(k)} \ll \#\widetilde\cM_{\train}$. Regarding the cost of the online phase, if we store the QR decomposition of normal equations \eqref{eq:normaleq} (at a cost of $\ord(n^3)$ operations, not reported on the table), the first three methods only need to solve the associated linear system, which costs $\ord(n^2)$ operations. In the case of \pdba, we additionally have to run a greedy algorithm in $\bR^m$ as explained in section \ref{appendix:OMP}, hence the additional cost reported in the table. Note however that the time required for this extra step is not significantly slowing down computations thanks to the fact that we work in $\bR^m$ and $m$ is usually moderate. This is in contrast to the greedy algorithm of the offline phase of \pga, which takes place in $\bR^\cN$.

\begin{table}
\centering
\begin{tabular}{|p{3cm}|c|c|c|c|}
\hline
\backslashbox{Cost}{Method}
&\pl
&\ppa
&\pga
&\pdba
\\\hline\hline

Offline: Database generation
& $\ord(\mathcal{N}^\alpha \#\widetilde\cM_{\train})$
&  $\ord(\mathcal{N}^\alpha \#\widetilde\cM_{\train})$
& $\ord(\mathcal{N}^\alpha \#\widetilde\cM_{\train})$
& $\ord(\mathcal{N}^\alpha \#\widetilde\cD_{\train})$ \\\hline

Offline: Construction reduced model
& $\mathcal{O}(\mathcal{N} \#\widetilde\cM_{\train}^2)$
& $\mathcal{O}(\mathcal{N} (\#\widetilde\cM^{(k)}_{\train})^2)$
& $\mathcal{O}(n\mathcal{N} \#\widetilde\cM^{(k)}_{\train})$
& $\cO(\cN \#\widetilde\cD_{\train}^{(k)} + m^3)$\\\hline

Online: Reconstruction
& $\ord(n^2 + n \cN)$
& $\ord(n^2 + n \cN)$
& $\ord(n^2 + n \cN)$
& $\ord(n^2 + n m \#\widetilde\cD^{(k)}_{\train} + n \cN)$\\\hline
\end{tabular}
\caption{\label{tab:cost} Computational cost of each method.}
\end{table}

\section{Application: Reconstruction of 3D blood velocity fields from Doppler ultrasound images}
\label{sec:numerical_example}

We apply the above described methodology to reconstruct a 3D blood velocity field on a human carotid artery from Doppler ultrasound images. The images are synthetically generated and the use of data from real patients is deferred to a future work. The main goal of the tests is twofold:
\begin{enumerate}
\item The first goal is to compare the different strategies to construct the space $V_n$ and the recovery algorithms. The method that can be consider a sort of a baseline to be compared to is the POD.
\item Second, we aim at assessing the ability of these recovery strategies to estimate the velocity in a semi-realistic idealised setting. Methods failing in achieving a $10\%$ accuracy on the peak velocity in this setting should not be used in more realistic scenarios. 
\end{enumerate}
The section is organized as follows. First, we present the parameter-dependent model that will define the manifold $\cM$ on which we will rely to compute different reduced models. Second, we explain how to define a measurement space $\Wm$ from a Doppler velocity image. Finally, we present results on the comparison of the different methods based on their reconstruction performance.

\subsection{The model: incompressible Navier-Stokes equations}
\label{subec:model}
Let $\Omega$ be a spatial bounded domain of $\mathbb{R}^3$ with the shape of a human carotid artery as given in Figure \ref{fig:geometry}. The boundary $\Gamma\coloneqq \partial \Omega$ is the union of the inlet part $\Gamma_i$ where the blood in entering the domain, the outlets $\Gamma_{o,1}$ and $\Gamma_{o,2}$ where the blood is exiting the domain after a bifurcation, and the walls $\Gamma_w$.


\begin{figure}[!htbp]
\centering
\subfigure[Domain $\Omega$ used in the simulations. Note the small stenosis in the upper part of the bifurcation. \label{fig:geometry}]{ 
\includegraphics[width=0.55\textwidth]{./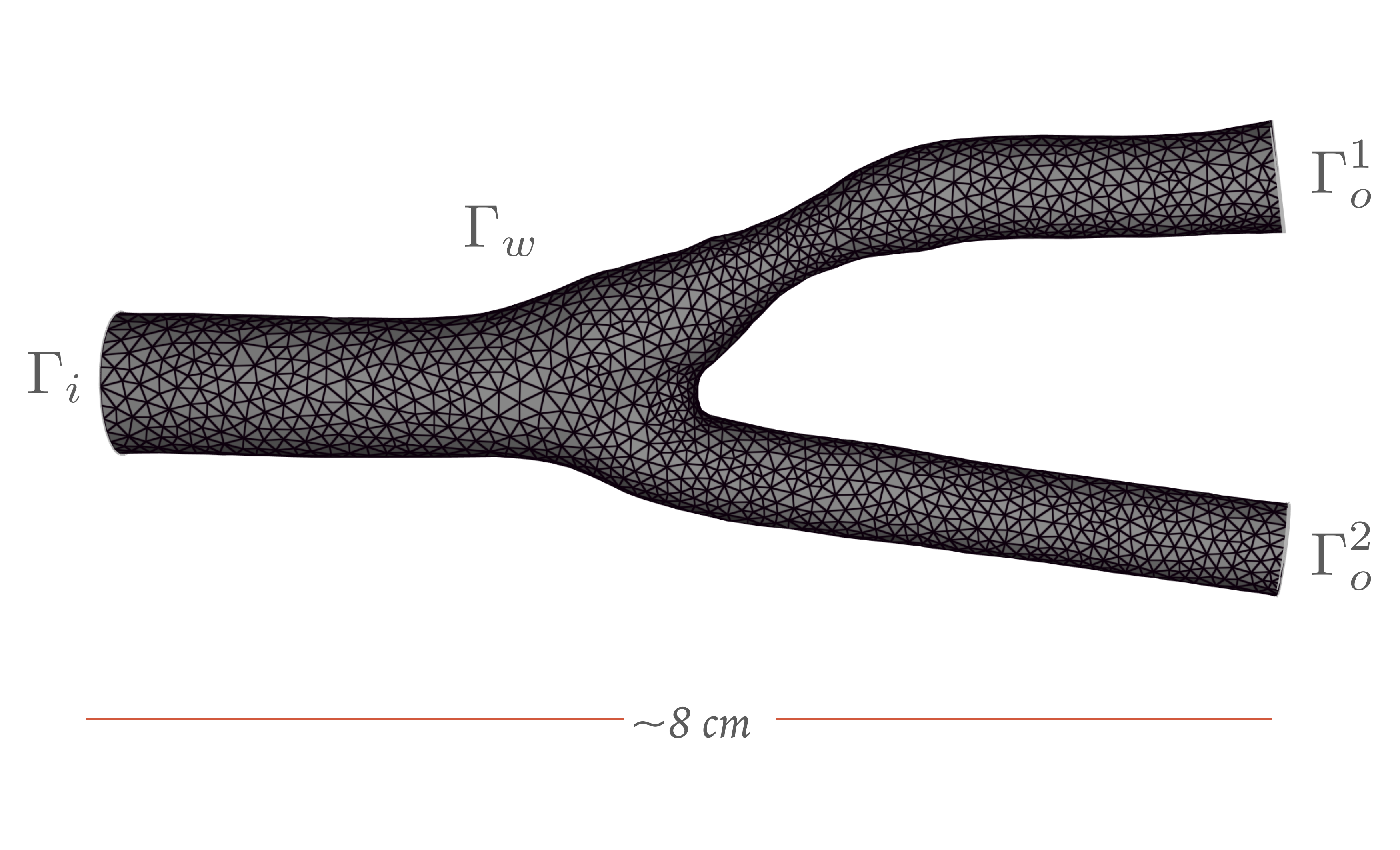}}
\subfigure[The function $g(t)$ for the inlet boundary condition. \label{fig:q_inflow_common_carotid}]{ 
\includegraphics[width=0.43\textwidth]{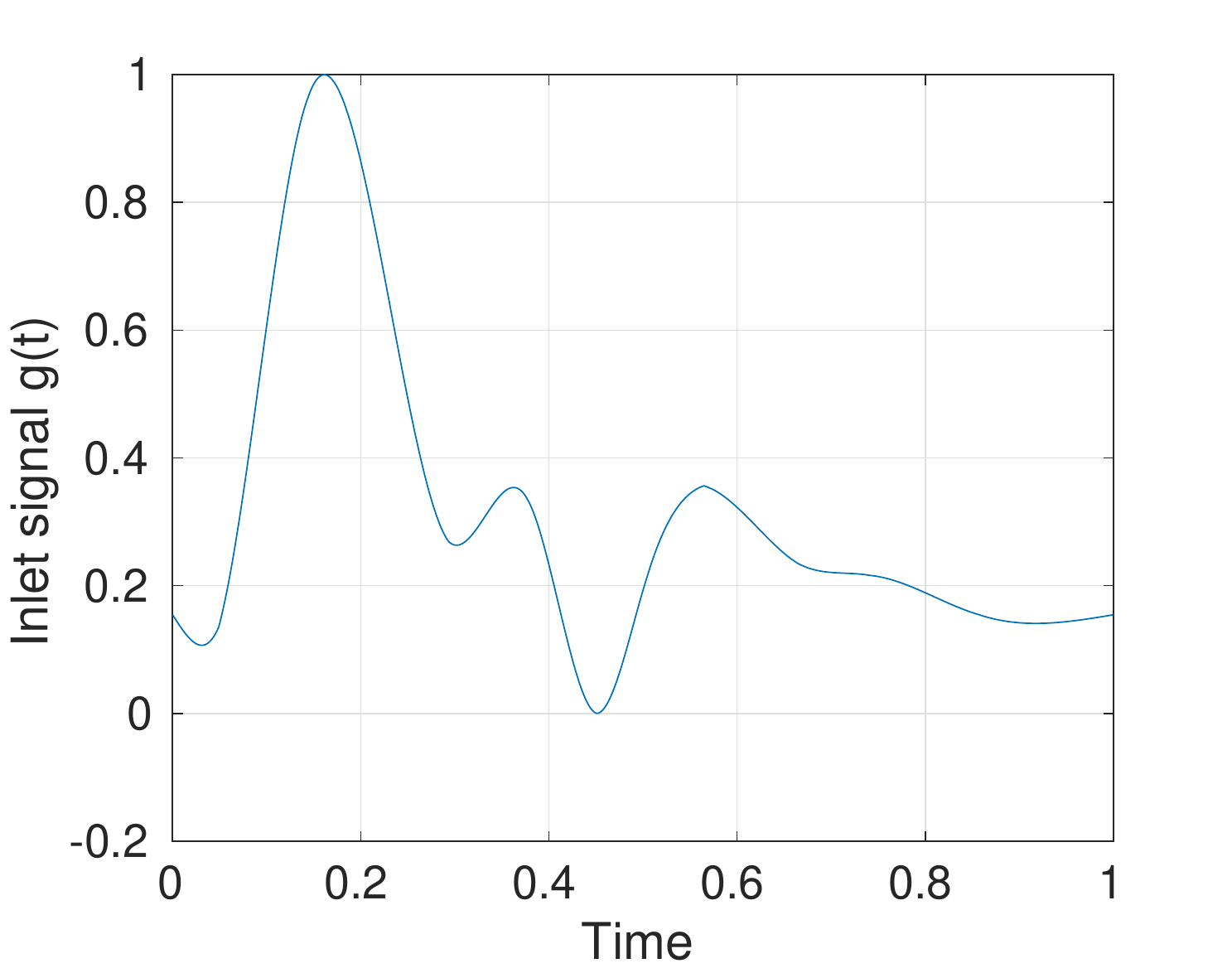}}
\caption{Geometry and inlet boundary function of the test case.}
\end{figure}

We consider the following incompressible Navier-Stokes equations on $\Omega$ and over the time interval $[0,T]$ for $T>0$. For a fluid with density $\rho\in\mathbb{R}^+$ and dynamic viscosity $\mu\in\mathbb{R}^+$, we search for all $t\in [0,T]$ the couple $(u(t),p(t)) \in [H^1(\Omega)]^3 \times L^2(\Omega)$ of velocity and pressure such that

\begin{equation}
\left\lbrace
\begin{aligned}
\rho \frac{\partial u}{\partial t}(t)  + \rho u(t) \nabla u(t) - \mu \Delta u(t) + \nabla p(t) = 0,\quad\text{ in } \Omega  \\ 
\nabla \cdot u (t) = 0,\quad \text{ in } \Omega.
\end{aligned}
\right.
\label{eq:Navier_Stokes}
\end{equation}
These equations are closed by adding a zero initial condition and the following boundary conditions:
\begin{itemize}
\item Boundary conditions:
\begin{itemize}
\item No-slip condition for the vessel wall, that is, $u = (0,0,0)^T$ on $\Gamma_w$.
\item The inlet boundary $\Gamma_i$ lies in the $xz$ plane and we apply a Dirichlet condition $u = [0,u_{\text{in}},0]^T$. The component b is a function $u_{\text{in}}(t, x, z) = u_0~ g(t) f(x, z)$, where:
\begin{itemize}
\item $u_0\in \mathbb{R}^+$ is an scaling factor. The function $g(t)$ is built by interpolating experimental flow data in the common carotid area taken in \cite{boundaryConditionsFromData}. Its behavior is given in Figure \ref{fig:q_inflow_common_carotid}.
\item The function $f$ is a 2D logit-normal distribution
\begin{equation}
  f(x) = \frac{1}{x(1-x)z(1-z)} \exp\left\lbrace-0.5 \left(\log \left(\frac{x}{1-x}\right)  - s  \right)^2 - 0.5
    \left(\log \left(\frac{z}{1-z}\right) \right)^2\right\rbrace,
  \label{eq:logit_normal_inlet}
\end{equation}
where the parameter, $s\in \mathbb{R}^+$, controls the axial symmetry of the inlet flow. \\
\end{itemize}


\item For the outlet boundaries $\Gamma_{o,1}$ and $\Gamma_{o,2}$, we use the so-called Windkessel model (see  \cite{Formagia_Cardiovascular_Mathematics}), which gives the average pressure over each $\Gamma_{o,k}$,
$$
\bar p_{o,k} = p_d^k + R_p^k \int_{\Gamma_{o,k}} u \cdot n ,\quad k=1,\,2
$$

where $p_d^k \in \mathbb{R}$ is called \emph{distal pressure} and is the solution to the ordinary differential equation:
\begin{equation}
\label{eq:pd}
\begin{cases}
C_d^k \dfrac{d p_d^k}{d t} + \dfrac{p_d^k}{R_d} &= \int_{\Gamma_{o,k}} u \cdot n  \\
p_d^k(t=0) &= p_{d, k} \text{ given.}
\end{cases}
\end{equation}

This model aims to represent the cardiovascular system behavior beyond the boundaries of
the working domain with a minimal increase in the computational cost. It is based on an analogy between flow and pressure with current and voltage in electricity. This is the reason why $C_d^k$ is called distant capacitance and $R_p^k$ and $R_d^k$ are respectively called proximal and distant resistances. These three parameters are positive real numbers.
\end{itemize}
\end{itemize}


\textcolor{felipe}{For the numerical solution of \eqref{eq:Navier_Stokes}, we use \textsl{FeLiSCe}, a finite elements method (FEM) software developed at Inria. Spatial discretization of the carotid geometry leads to a tetrahedron mesh of 42659 vertices and 225196 tetrahedra. The meshing process is in charge of MMG \cite{mmg3d}. Time is discretized with a semi-implicit backward Euler scheme with time-step $\delta t = 2 \cdot 10^{-3} s$, which means that the convective term in the Navier-Stokes equations is written with the velocity explicitly and the velocity gradient implicitly in order to circumvent the non-linearity of the system. An explicit scheme is used to numerically solve the ODE on the distal pressure in the Windkessel model. In addition, a backflow stabilization is added in order to address potential instabilities in the outlet boundaries (see, e.g., \cite{black_flow_benchmark_bertoglio}). Data visualization is done by using ParaView \cite{paraview} and Vizir \cite{vizir}.}

\textcolor{felipe}{The coupled problem for velocity and pressure is discretized using $\mathbb{P}_1-\mathbb{P}_1$ Lagrange elements, with a monolithic approach. In order to avoid the inf-sup constraint imposed by the saddle point nature of the problem we use the Brezzi-Pitkäranta stabilization approach (for further details, we refer to \cite{EG2013} and \cite{brezzi1984}), that basically perturbs the weak form of the mass conservation equation with a \textsl{stiffness-like} term that scales with the square of the elements size. Each forward simulation has $\mathcal{N} = 127977$ degrees of freedom at each time step. 
The linear systems are solved by means of a GMRES \cite{gmres} method with additive Schwarz preconditioning.}


\begin{remark}
Note that one could use more sophisticated models involving, for instance, fluid-structure interactions or more refined Winkessel models for the pressure. Our present model is a trade-off between its degree of realism and the difficulty and time to solve it. We refer to \cite{Formagia_Cardiovascular_Mathematics} for a detailed overview of cardiovascular modeling.
\end{remark}

Let us define the manifold of solutions that we consider in our numerical experiments. We set the following coefficients to a fixed value
\begin{equation}
\begin{cases}
\rho & = 1 \; \text{g}/\text{cm}^3 \\
\mu &= 0.03 \; \text{Poise} \\
C_d^k &= 1.6 \times 10^{-5} \text{ for $k=1,2$} \\
R_{p}^k &= 7501.5  \text{ for $k=1,2$} \\
p_d^k &= 1.06 \times 10^5 \text{ for $k=1,2$} \\
R_d^1 & = 60012 \\
\end{cases}
\end{equation}
The ratio of the distal resistances for the Windkessel model at the outlets is introduced:
$$
\eta \coloneqq R_{d}^1 / R_d^{2} = 60012 / R_d^{2}.
$$
This parameter plays an important role since it impacts on how the blood flow splits between the two branches. When $\eta\to 0$ or $\infty$, one branch is obstructed and the blood tends to flow through the other branch. In the following, we call this situation an arterial blockage. When $\eta\approx 1$, the flow splits more or less equally and there is no blockage.

We define the heart rate as the number of cardiac cycles per minute, that is,
$$
\text{HR} \coloneqq 60/T_c,
$$
where $T_c>0$ is the cardiac cycle duration expressed in seconds. We have $T_c = T_{sys}+T_{dia}$, where $T_{sys}$ and $T_{dia}$ are the duration of the systole and diastole respectively.

The manifold $\cM$ is generated by the variations of the following six parameters
\begin{equation}
\label{eq:params}
\begin{cases}
t & \in [0, T] \\
\text{HR} & \in [48, 120] \\
s & \in [0, 0.2]  \\
T_{sys} & \in [0.2863, 0.3182] \; \text{s.} \\
u_0 & \in [17, 20] \; \text{cm/s} \\
\eta & \in [0.05, 0.2] \cup [0.5, 1.5] \cup [5, 20]
\end{cases}
\end{equation}
Note that the time $t$ is also seen as a parameter. The parameter set is thus
$$
\rY = \{  (t, \text{HR}, s, T_{sys}, u_0, \eta) \in \bR^6 \,:\, t\in [0,T],\,\text{HR} \in [48, 120],\,s\in  [0, 0.2],\, \dots   \} \subset \bR^6
$$
and the manifold of solutions is
$$
\cM \coloneqq \{  u(y) \in [H^1(\Omega)]^3\, :\, y \in \rY \}.
$$
At this point, several comments are in order:
\begin{itemize}
\item Note that we only consider velocity fields aince in the present work we are only concerned by the reconstruction of the blood flow velocity. The reconstruction of other quantities of interest, such as the pressure, will be addressed in a forthcoming work.
\item For each $y\in\rY$, the velocity $u(y)$ is a function of $[H^1(\Omega)]^3$. In the following, we will view it as a function from
$$
V \coloneqq [L^2(\Omega)]^3,
$$
which, endowed with the inner product,
$$
\< (v_1, v_2, v_3),  (w_1, w_2, w_3)\>
\coloneqq
\sum_{i=1}^3 \< v_i, w_i \>_{L^2(\Omega)},\quad \forall (v, w) \in [L^2(\Omega)]^3,
$$
defines a Hilbert space.
\item Since the time variable has been included as a parameter, a simple way to build nonlinear reduced models is to set a window parameter $\tau>0$ and consider the subset $\cM^{(k)}=\cM_{[t_{k}-\tau, t_{k}+\tau]} \subset \cM$, where $t$ is restricted to the interval $[t_{k}-\tau, t_{k}+\tau]$ of size $2\tau$ centered around a given time $t_k$. We can then build reduced models to reconstruct this specific time interval. As we will see in the numerical experiments, this strategy is very effective in our problem because the velocity presents two regimes given by the systole and diastole periods.

\item  The computation of reduced models involves a discrete training subset $\widetilde\cM_{\train} \subset \cM_h$ which, in the experiments below, involves $\#\widetilde\cM_{\train} = 78528$ snapshots $u(y)$. The parameters are chosen from a uniform random distribution and we only save the solutions during the second cardiac cycle of each simulation.

\item For the purposes of illustrating the potential of the method for diagnoses, we introduce a  notion of sickness in terms of the arterial blockage in the following way.
\begin{definition}[Sick patient]
We say that the output of the simulation corresponds to a healthy patient when $\eta \in [0.5, 1.5]$. Outside of this range, simulations correspond to sick patients.
\end{definition}
We thus have the partition
$$
\cM = \cM_{\healthy} \cup \cM_{\sick},\qquad \cM_{\healthy} \cap \cM_{\sick} = \emptyset
$$
with $\cM_{\healthy} \coloneqq \cM_{\eta \in [0.5, 1.5]}$ and $\cM_{\sick} = \cM_{\eta \in [0.05, 0.2]\cup [5,20]}$.
\end{itemize}

\subsection{Measurements}
\label{subsec:measurements}

\textcolor{felipe}{In a regular partition of $[0,T]$, we are given Doppler ultrasound images that contain information on the blood velocity on a subdomain of the carotid. For typical ultrasound machines with acquisition time of 0.1 milliseconds, a CFI is built with, for instance, 32 consecutive B-mode frames, which leads to a measurement sampling of 32 milliseconds. From the image, the  observations $\ell_i(u)$ are extracted and used to build a complete time-dependent 3D reconstruction of the blood velocity in the whole carotid $\Omega$.}

Depending on the technology of the ultrasound device, there are two different types of velocity images. In most cases, ultrasound machines give a scalar mapping which is the projection of the velocity along the direction $b$ of the ultrasound probe. This mapping is called color flow image (CFI, see Figure \ref{fig:cfi}). In more modern devices, it is possible to get a 2D vector flow image corresponding to the projection of the velocity into the plane. This mapping is called vector flow image (VFI, see Figure \ref{fig:vfi}). For both imaging modes, the velocity is estimated by time averaging techniques (we refer to  \cite{Kasai_auto_correlation} and \cite{VFI_Jensen_Nikolov} for further details). 

\begin{figure}[!htbp]
\centering
\subfigure[Color flow image (CFI) \label{fig:cfi}]{ 
\includegraphics[height = 5 cm]{./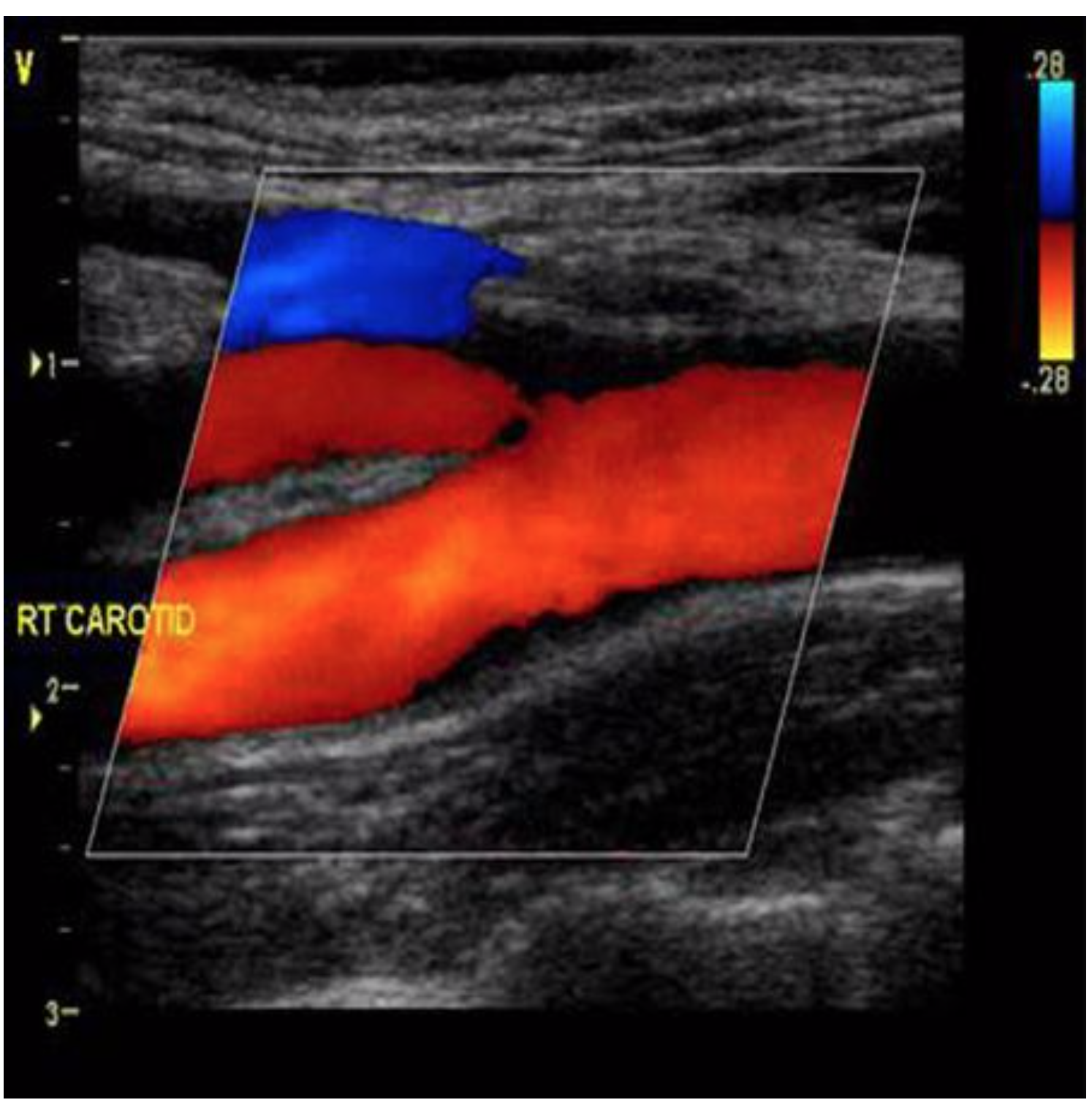}}
\subfigure[Vector flow image (VFI) \label{fig:vfi}]{ 
\includegraphics[height = 5 cm]{./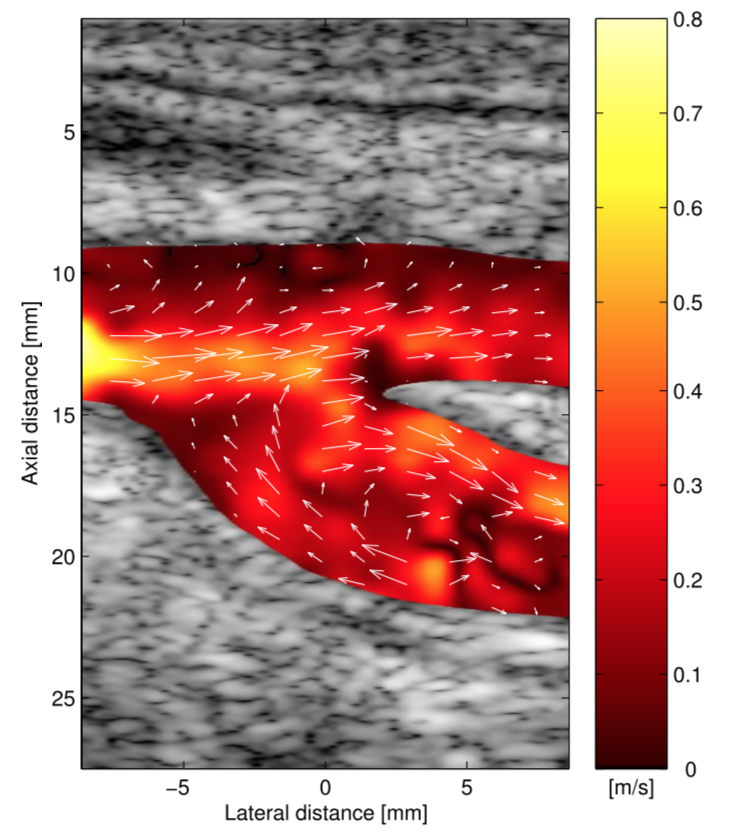}}
\caption{Velocity image of the common carotid bifurcation.}
\label{fig:CFI_and_VFI}
\end{figure}

In the following, we work with an idealized version of CFI images. For each time $t$, a given image is a local average in space of the velocity projected into the direction in which the ultrasound probe is steered. More specifically, we consider a partition of $\Omega = \cup_{i=1}^m \Omega_{i} $ into $m$ disjoint subdomains (voxels) $\Omega_{i}$. Then, from each CFI image we collect
\begin{equation}
 \ell_i(u) = \int_{\Omega_i} u \cdot b ~ d\Omega_i,\quad 1\leq i\leq m,
  \label{eq:the_measures1D}
\end{equation}
where $b\in\mathbb{R}^3$ is a unitary vector giving the direction of the ultrasound beam. From  \eqref{eq:the_measures1D}, it follows that the Riesz representers of the $\ell_i$ in $V$ are simply
$$
\omega_i =  \chi_{i} b,
$$
where $ \chi_{i} $ denotes the characteristic function of the set $\Omega_i$. Thus the measurement space is
$$
\Wm = \Wm^{(CFI)} \coloneqq \vspan\{\omega_i\}_{i=1}^m.
$$
Since the voxels $\Omega_i$ are disjoint from each other, the functions $\{\omega_i\}_{i=1}^m$ are orthogonal and therefore having a CFI image is equivalent to having

\begin{equation}
  \omega = \mathbb{P}_{W_m} u = \sum_{i=1}^m \innerp{ \omega_i} { u } \omega_i =   \sum_{i=1}^m \ell_i(u) \omega_i.
  \label{eq:the_measures_projectionWm}
\end{equation}

\begin{remark}
The case of VFI images can be treated similarly. This imaging mode gives $2m$ measurements
$$
 \ell_i(u) = \int_{\Omega_i} u \cdot b ~ d\Omega_i,\quad 1\leq i\leq m,
$$
and
$$
\ell_{m+i}(u) = \int_{\Omega_i} u \cdot b_{\perp} ~ d\Omega_i,\quad 1\leq i\leq m,
$$
where $b$ is the unitary vector giving the direction of the ultrasound beam and $b_{\perp}$ is the unitary vector perpendicular to it and contained in the image plane. Therefore, $\Wm^{(VFI)} = \Wm^{(CFI)} \oplus  \vspan\{ \chi_{\Omega_i} b_{\perp} \}_{i=1}^m$ which is a space of dimension $2m$. This clearly shows that the additional direction $b_{\perp}$ enriches the quality of the measurements in the sense that for any $u\in V$, the approximation error $\Vert u - P_{\Wm}u\Vert$ will be smaller with the VFI mode than with the CFI one. As a result, the CFI mode which we consider in our examples is a more challenging case since the measurements contain less information. 
\end{remark}

\subsection{Reconstruction on a first example with healthy patients}
\label{subsec:firstTest}

To validate our method, we first consider a simple example where we only work with healthy patients, so the manifold is $\cM_\healthy$. One color flow image contains information of the velocity averaged over 552 voxels. As explained in  section \ref{subsec:measurements}, this sets the dimension of the observations space  to $m=552$ (whereas for VFI, $m=1104$). As a result, each image can be seen as an observation $\omega\in\Wm$ (see Figure \ref{fig:medidas_CFI}). The dimension $m=552$ may seem quite large but it is representative of the one provided by modern pulsated ultrasound devices. 297 healthy patients are simulated in order to build the training set. For each one of them a number of snapshots containing the second cardiac cycle is stored so there are multiple snaptshots per patient due to time marching. 
This leads to a training set composed of $\#\widetilde\cM_{\train} = 56383$ snapshots. The performance of the algorithms is tested on a test set $\widetilde\cM_{\test}$ of 32 healthy patients each one with a parameter configuration in range but different from those inside $\widetilde\cM_{\train}$.

\begin{figure}[!htbp]
  \centering
  \includegraphics[height = 5 cm]{./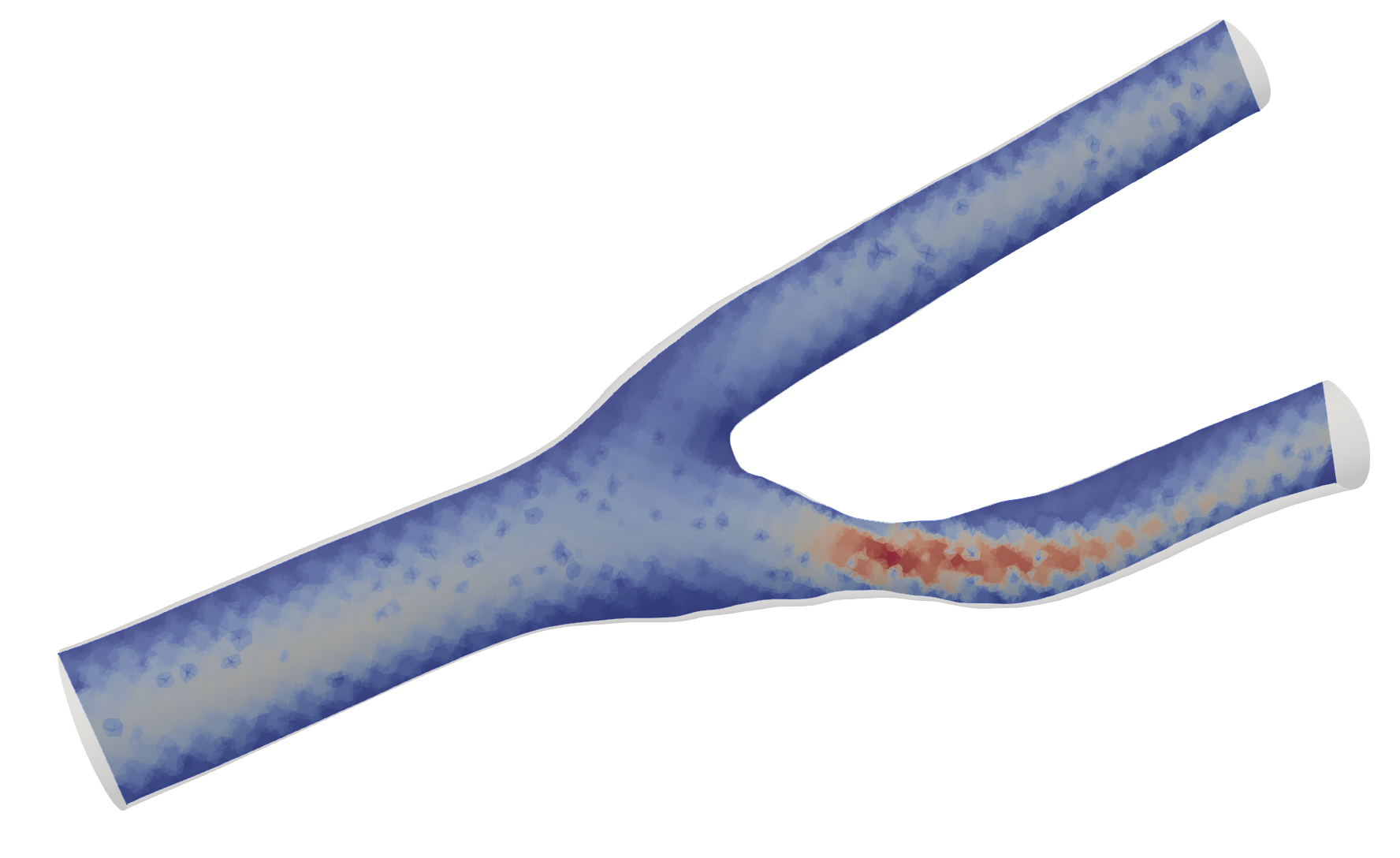} \\
  \includegraphics[height = 1 cm]{./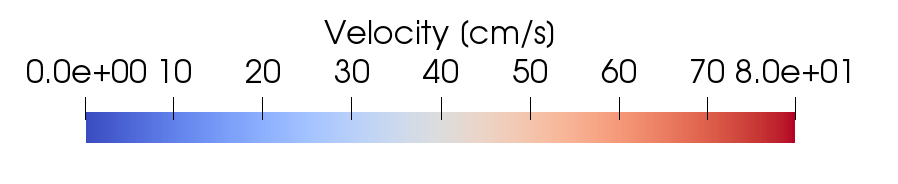} \\
  \caption{Example of synthetic CFI measures used in first example. The image leads to an space $W_m$ of dimension $m=552$}
  \label{fig:medidas_CFI}
\end{figure}

For the comparison, we implement the algorithms outlined in section \ref{sec:summary-methods}. We recall them below and give specific details related to the current test case:
\begin{enumerate}
\item \emph{Linear PBDW:} The space $V_n$ is built from a classical POD of the whole training set $\widetilde\cM_{\train}$.
\item \emph{Nonlinear algorithm with manifold partitioning:} In real medical examinations, the heart rate $\text{HR}$ of the patient and the time $t$ in which the ultrasound image is taken are known. We exploit this fact to decompose $\widetilde\cM_{\train}$ into $K=IJ$ subsets
\begin{equation}
\widetilde\cM_{\train} = \bigcup_{ \substack{ k = (i, j)  \\ (i,j) \in \{ 1,\dots, I \} \times \{ 1,\dots, J \}} }  \widetilde \cM^{(k)},
\end{equation}
where, for each $k = (i, j)$ in $\{ 1,\dots, I \} \times \{ 1,\dots, J \}$,
\begin{equation}
\widetilde \cM^{(k)}= \{  u\in \widetilde\cM_{\train} :  t \in [t_i - \tau, t_i +
  \tau],~\text{HR} \in [\text{HR}_j - \delta_{\text{HR}}, \text{HR}_j +
  \delta_{\text{HR}} ] \}.
  \label{eq:training_manifold}
\end{equation}

Among the possibilities to build the manifold partitioning, our approach is driven by the reconstruction error of the elements inside $\widetilde\cM_{\test}$. That is to say, we have computed the reconstruction $u^*_i$ of $u_i$ for $i=1,\ldots, \# \widetilde\cM_{\test}$ for a set of couples $\( \delta_{\text{HR}}, \tau \)$ with a fixed dimension for the reduced model ($n=30$). Hence, we can pick the optimal values in this sense:
$$
\( \delta_{\text{HR}}^*, \tau^* \) = \argmin_{\delta_{\text{HR}},\tau}\( \frac{1}{\# \widetilde\cM_{\test}} \sum_{i=1}^{\# \widetilde\cM_{\test}} \frac{ \norm{u_i - u^*_i \( \delta_{\text{HR}}, \tau \)}}{\norm{u_i} } \)
$$
which has led in our context to the sizes $\tau = T_c /10$ and $\delta_{\text{HR}} = 5 $ beats per minute.

For each subset  $\widetilde \cM^{(k)}$, a reduced space is built. Two constructions have been tested:
\begin{itemize}
\item POD of $\widetilde \cM^{(k)}$. We call this approach \ppa, Partitioned-POD-affine, in the plots.
\item A greedy algorithm. We call this approach \pga, Partition-Greedy-affine, in the plots
\end{itemize}
During the online reconstruction, given $t$ and HR, we select the appropriate subset $\widetilde \cM^{(k)}$ that includes $t$ and HR and reconstruct with a linear PBDW with the reduced model corresponding to $\widetilde \cM^{(k)}$.
\item \emph{Data-driven nonlinear algorithm with manifold partitioning:} This approach is labeled in the plots as \pdba, Partitioned-Data-Based-affine. Since each CFI image can be seen as an observation $\omega\in\Wm$, we run the Orthogonal Matching Pursuit algorithm of section \ref{sec:nonlinear} to build $V_n(\omega)$ and do the reconstruction. Note that the greedy search has to be done online since we need the knowledge of the measurement. 
\end{enumerate}

For each state $u\in \widetilde\cM_{\test}$, we compute the relative error: 
\begin{equation}
  e(u, A_{n,m}) = \frac{ \norm{ u - A_{m,n}(P_{\Wm}u) } }{ { \norm{u} } }
  \label{eq:error_sim_def}
\end{equation}
where $A_{m,n}(P_{\Wm}u)$ denotes any of the above four reconstruction algorithms. 
A patient in the training set is represented by a sequence of simulated states during the second cardiac cycle. This raises the interest in evaluating the reconstruction quality by looking at the following relative error in time:
\begin{equation}
  e(t, u, A_{m,n}) = \frac{ \norm{u(t) - A_{m,n}(P_{\Wm}u(t))} } {\left( \int_{T_c}^{2T_c} \norm{u}^2
      \dt\right)^{1/2}}
  \label{eq:errorl2_2}
\end{equation}
where we normalize by the total energy in the cardiac cycle $\left(\int_{T_c}^{2T_c} \norm{u(t)}^2 \dt \right)^{1/2}$.


\textcolor{felipe}{Figures \ref{fig:error_CFI_whole_sim_av} and \ref{fig:error_CFI_whole_sim_wc} give the average and worst case performance of the four methods,} 
$$
e_{\av}(A_{n,m}) = \frac{1}{\#  \widetilde\cM_{\test}}
\sum_{u\in\widetilde\cM_{\test} } e(u, A_{n,m}),
\quad 
e_{\wc}(A_{n,m}) =
\max_{u\in\widetilde\cM_{\test} } e(u, A_{n,m}).
$$

\begin{figure}
  \centering
  \subfigure[Average error]{
  \label{fig:error_CFI_whole_sim_av}
  \includegraphics[height = 5 cm]{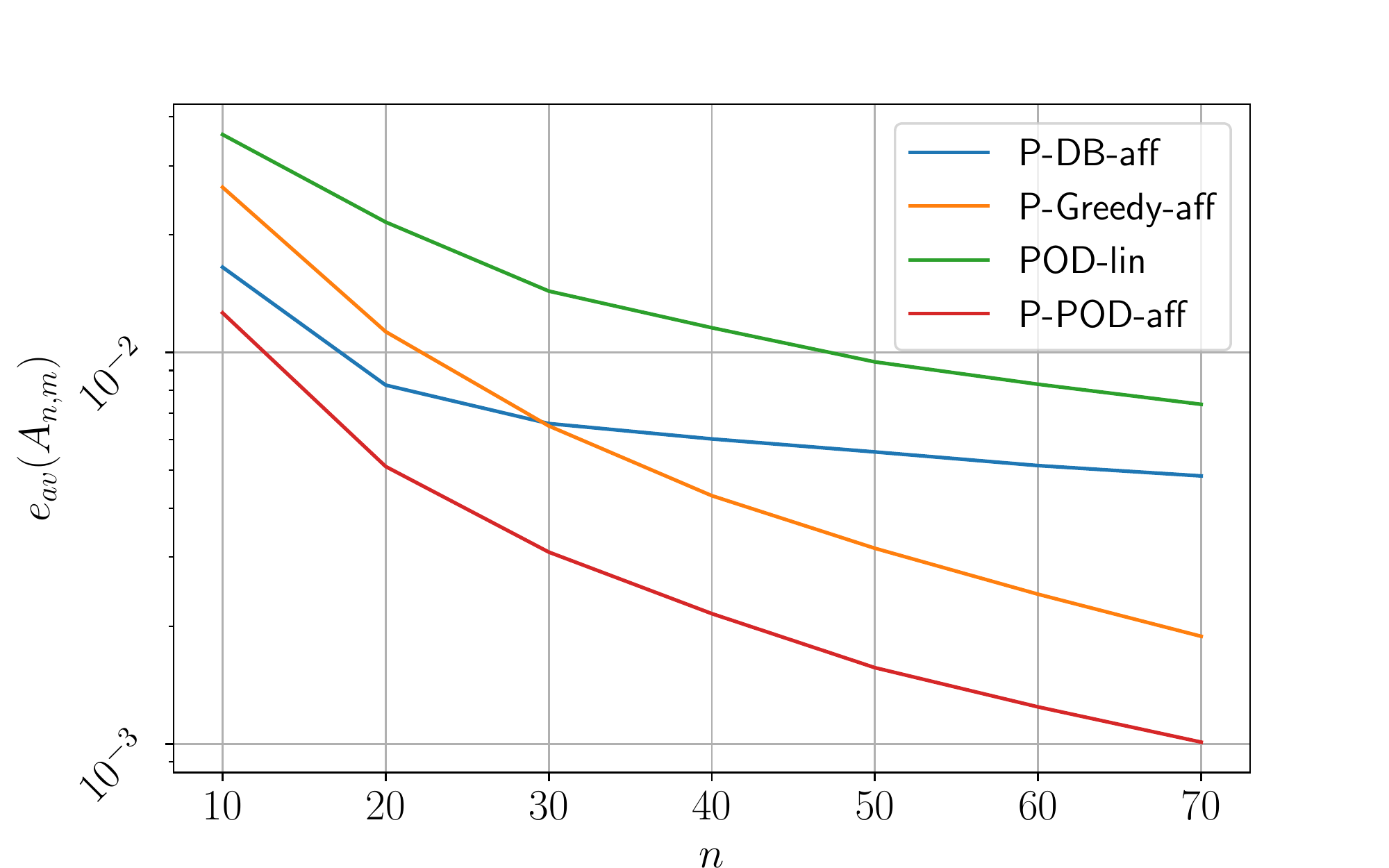}}
  \subfigure[Worst error]{
  \label{fig:error_CFI_whole_sim_wc}
  \includegraphics[height = 5 cm]{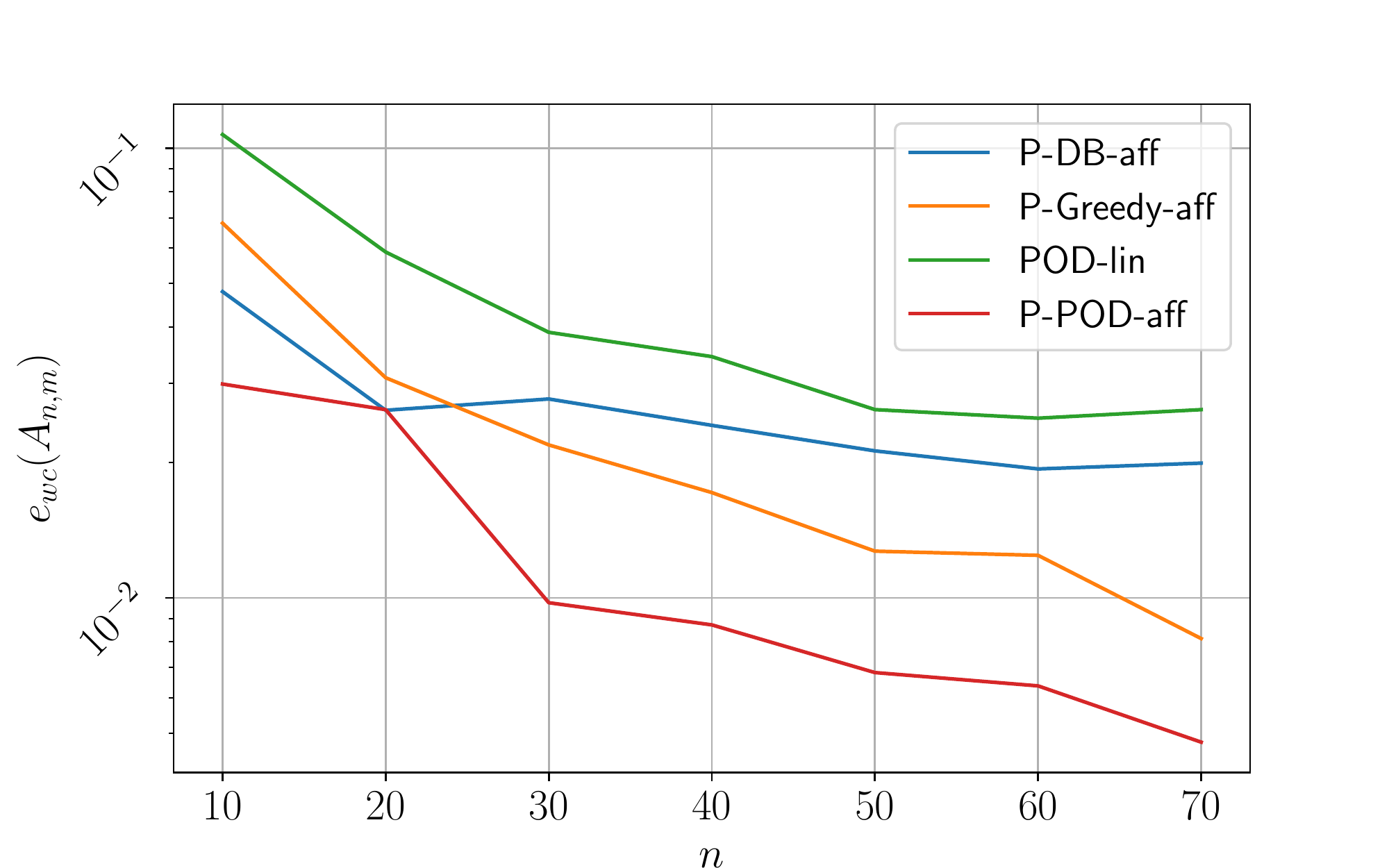}}
  \caption{Benchmark for first numerical example. The accuracy is evaluated using the error \eqref{eq:error_sim_def}}
  \label{fig:eror_CFI_mean_and_max}
\end{figure}

Note that as $n$ increases, the error decreases for all methods, except perhaps for the \pdba~ approach where the error tends to stagnate for large values of $n$. This could be due to the fact that \pdba~heavily relies on the measurement information, which, in the present application, might not deliver enough to learn reduced models $V_n(\omega)$ that improve the accuracy as $n$ grows. For instance, the Doppler measurements are close to zero in the diastole of the cardiac cycle so the information that they provide may be insufficient to build a good $V_n(\omega)$. We also see from the figure that the nonlinear method \ppa~outperforms the rest in the sense that it delivers a given target accuracy with a smaller dimension $n$ of the reduced model. For instance, if we fix a target accuracy on the average performance to $10^{-2}$, we see that the \pl~requires 40 modes to achieve it,  \pga~requires 20, \pdba~requires 17 and \ppa~requires only 10 (see Figure \ref{fig:error_CFI_whole_sim_av}).


We next fix $n=30$ and study the error in time $e(t, u, A_{m,30})$ on Figure \ref{fig:error_CFI_time}. We observe that the reconstruction tends to be better during the late diastole phase of the cardiac cycle.

\begin{figure}
  \centering
   \includegraphics[height = 5 cm]{./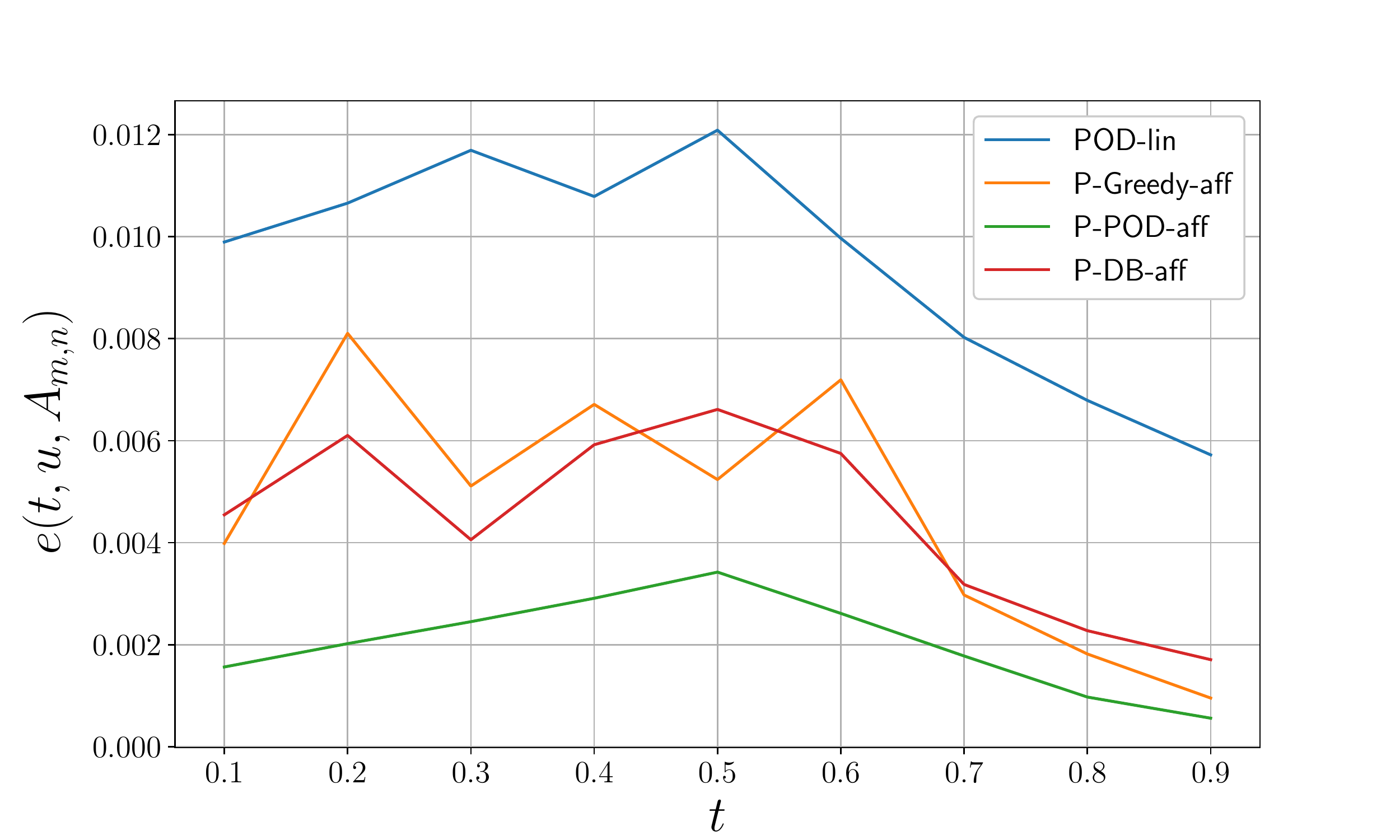}
  \caption{Average error comparison for test case described in section \ref{subsec:firstTest}. The benchmark shows the temporal evolution of the quantity \eqref{eq:errorl2_2} during the cardiac cycle. The dimension of $V_n$ is set to 30.}
  \label{fig:error_CFI_time}
\end{figure}

As discussed in section \ref{sec:algos}, the inf-sup constant $\beta(\Vn, \Wm)$ might yield to stability issues when $n\to m$ since its value tends to zero (see equations \eqref{eq:err-wc-pbdw} and \eqref{eq:err-ms-pbdw}). Figures \ref{fig:error_beta_systole} and \ref{fig:error_beta_diastole} show its behavior for the four methods during the systole and diastole period. We observe that the four methods perform similarly in terms of stability for the peak systole reconstruction. For the diastole phase, we observe that the inf-sup constant in \pdba~performs slightly worse than the rest. We think that this could be due to the fact that the measurement space $\Wm$ is not rich enough to allow \pdba~to properly learn reduced models when $n$ becomes large.

\begin{figure}
  \centering
   \includegraphics[height = 5 cm]{./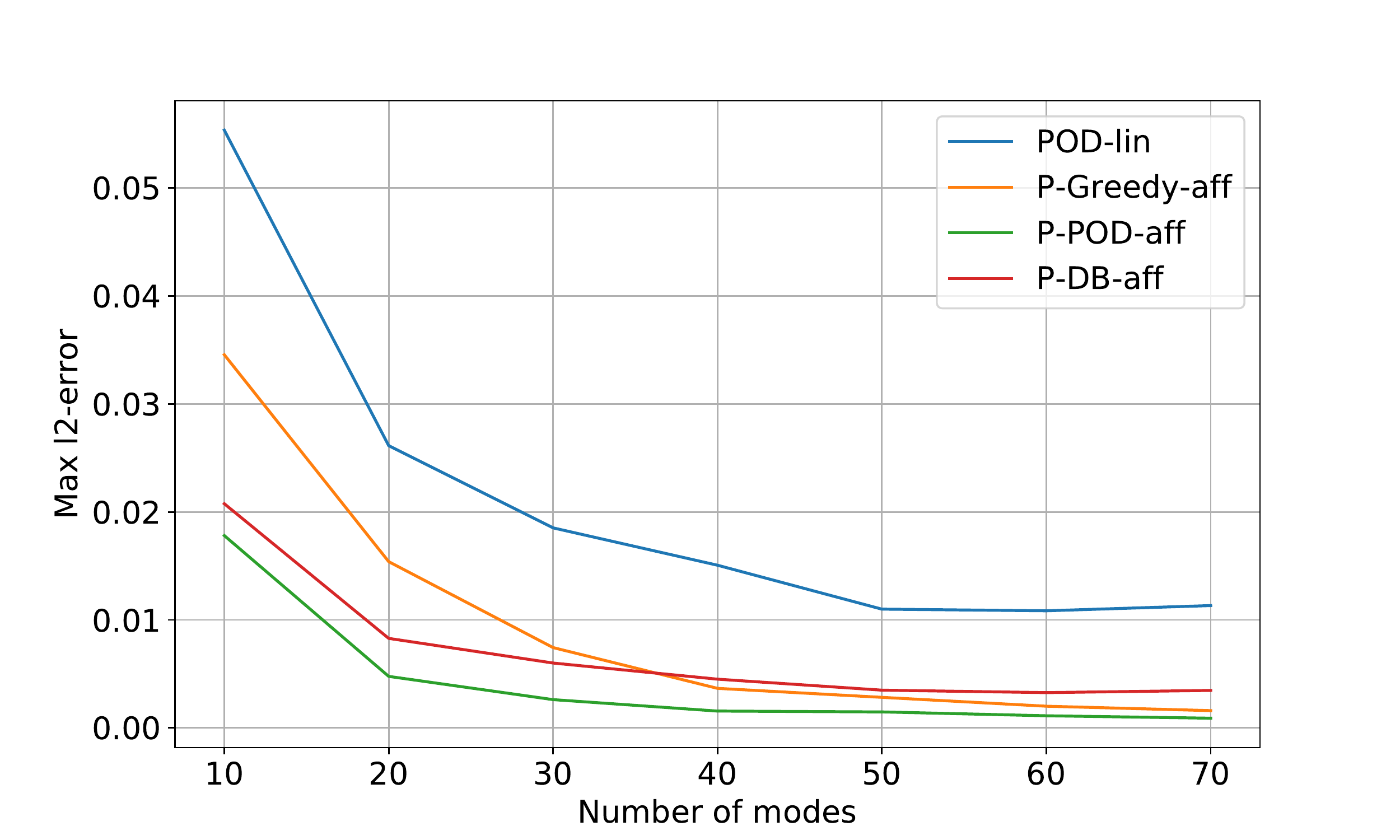}
   \includegraphics[height = 5 cm]{./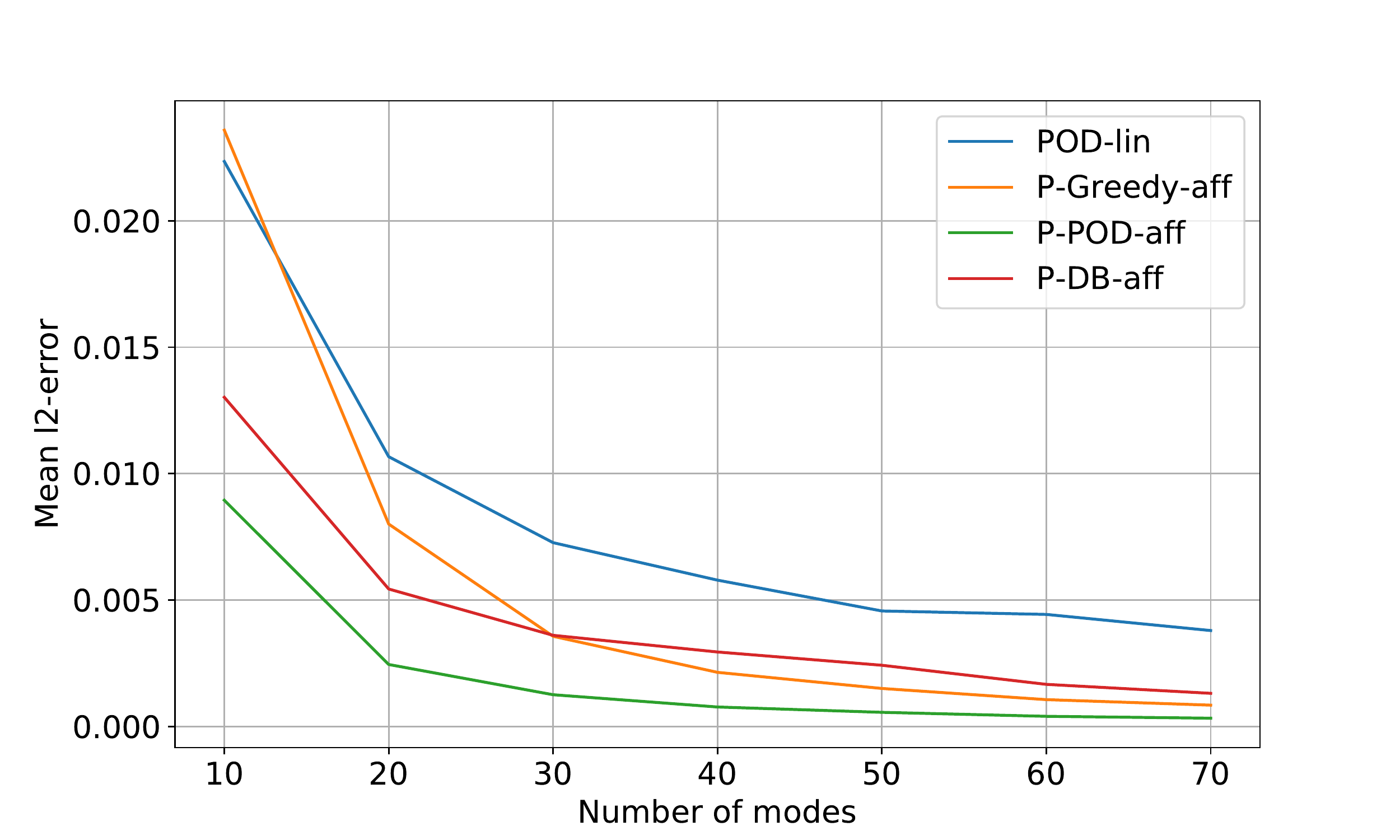}
  \caption{Comparison of reconstruction error for peak systole. Max error in the left and mean error in the right.}
  \label{fig:error_mean_max_peak}
\end{figure}

\begin{figure}
  \centering
  \subfigure[Peak systole]{ 
  \label{fig:error_beta_systole}
  \includegraphics[height = 5 cm]{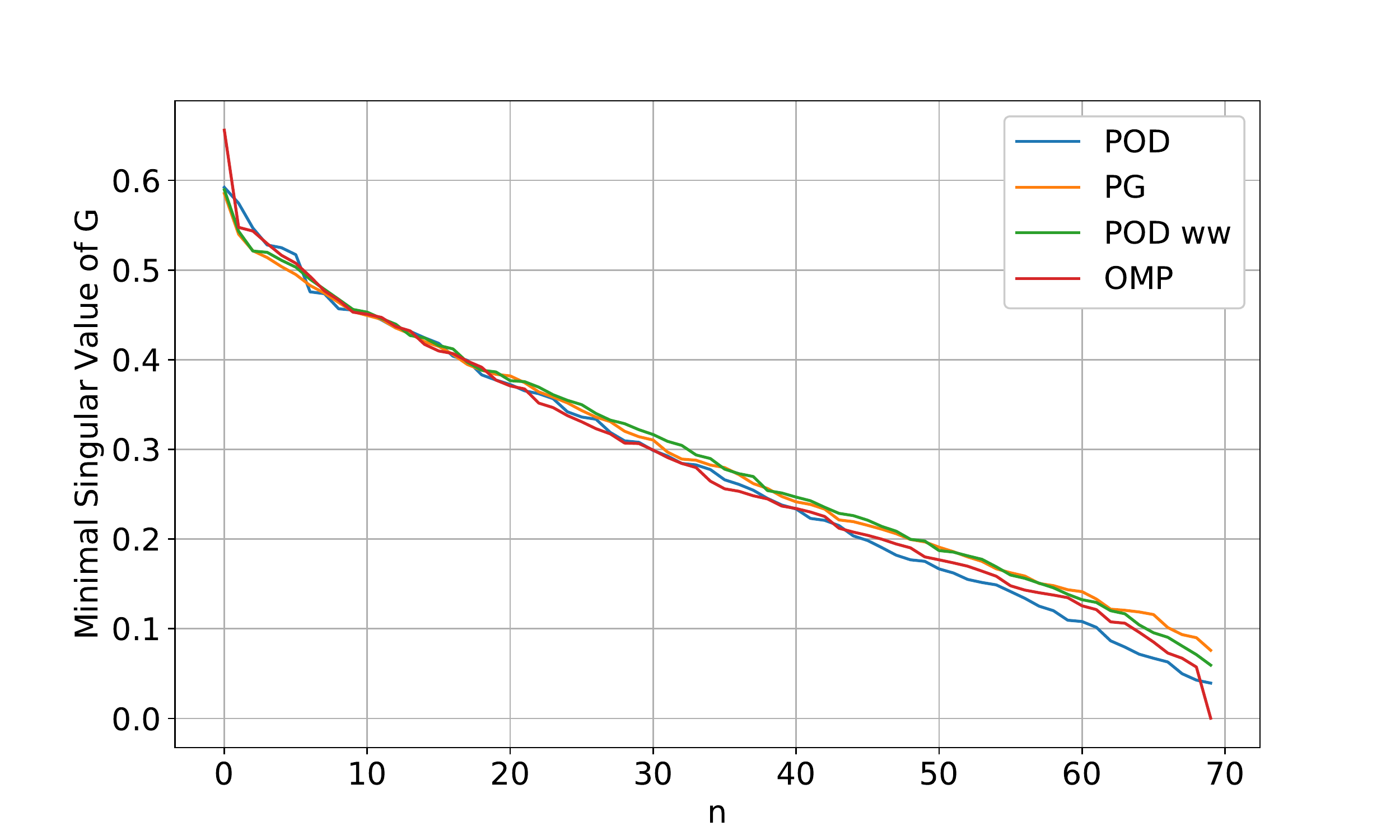}}
  \subfigure[Late diastole]{
  \label{fig:error_beta_diastole}
  \includegraphics[height = 5 cm]{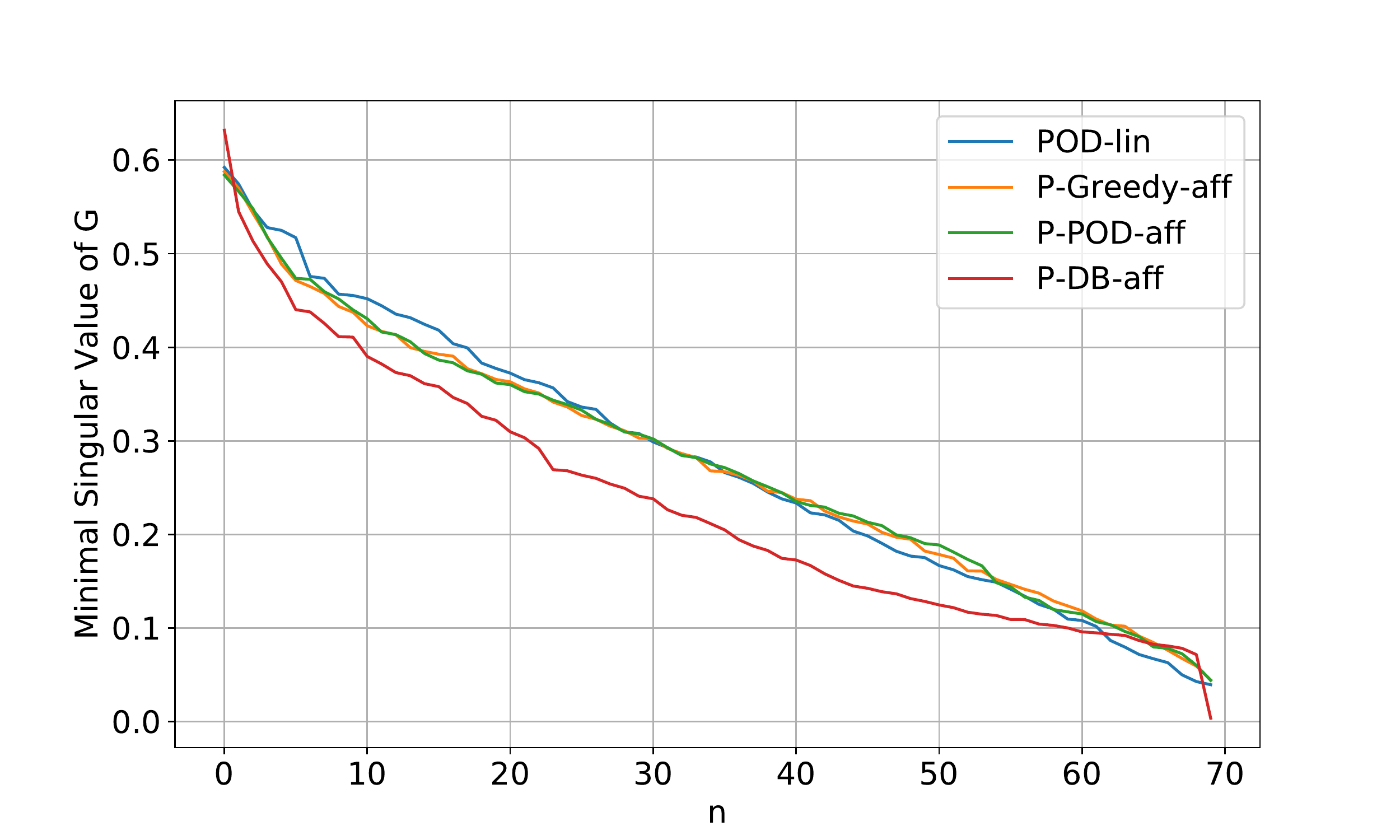}}
  \caption{Behavior of $\beta(\Vn, \Wm)$ as a function of $n$.}
  \label{fig:error_beta}
\end{figure}

\subsection{Application to arterial blockage detection}
\label{subsec:secondTest}
In this example we illustrate that even when the Doppler images do not give information on the whole carotid, we can nevertheless reconstruct the velocity field in the whole domain with  our methodology. This is important for actual practice since doctors do not have images in the whole carotid due to morphological constraints. We also show in our example that the method has potential to efficiently estimate in real time relevant quantities of interest.

We illustrate these ideas in the following example: we consider the same setting as before but now the Doppler image does not provide information about the flow in the carotid bifurcations. Therefore, the image does not \textsl{see} the flow split in the common carotid downstream (see Figure \ref{fig:sint_blockage_CFI}). In this example we have tested the impact of working with CFI ($m=233$) or VFI images ($m=466$).


We train our reconstruction methods on a set $\widetilde \cM_{\train}$ containing sick and healthy patients. Here, we only work with two of our previous nonlinear algorithms: 
\begin{enumerate}
\item \ppa
\item \pdba
\end{enumerate}
Figure \ref{fig:error_blockage_CFI_max_and_avg} shows the average and worst case errors 
$$
e_{\av}(A_{n,m}) =
\sum_{u\in\widetilde\cM_{\test} } e(u, A_{n,m}),
\quad 
e_{\wc}(A_{n,m}) =
\max_{u\in\widetilde\cM_{\test} } e(u, A_{n,m}),
$$
as a function of the dimension $n$ of $V_n$. Like in the previous example, both methods are delivering a very satisfactory accuracy: the average error is below $5 \cdot 10^{-2}$ for both methods for all values of $n$. The method \ppa~consisting in a partition of the manifold outperforms the data-based one, \pdba.

We next show that the method is efficient to assist in the detection of arterial blockages that may cause severe health problems like a stroke. Since a blockage alters the distribution of the velocity field after the bifurcation, a quantity of interest that could serve as a clinical index is the ratio
\begin{equation}
  r = \frac{Q_2(t_{peak})}{Q_1(t_{peak})}
  \label{eq:flow_ratio}
\end{equation}
where
$$
Q_i(t) \coloneqq \int_{\Gamma_o^i} u(t) \cdot n
$$
is the blood flow at the outlet $\Gamma_o^i$, $i=1,\,2$, and $t_{peak}$ is the peak systole instant. Figure \ref{fig:flow_reconstruction} shows the evolution of $Q_i(t)$ in time for a sick patient and its approximation with our two reconstruction methods.  We observe that, regardless of the image format (CFI or VFI), both methods deliver very satisfactory predictions of the flow.

In Figure \ref{fig:flow_ratio}, we compare the value of the exact ratio $r$ with the reconstructed one for sick and healthy patients $u\in \widetilde \cM_{\test}$. 

To define a threshold ratio $r^*$ to decide whether the patient has arterial blockage or not, we can take the average of the flow ratios between the healthier of the patients in the sick group and the sicker of the patients in the healthy group, namely,
\begin{equation}
  r^* \coloneqq  \frac{ \min_{v \in \widetilde\cM_{\sick}}  r(v) + \max_{u \in \widetilde\cM_{\healthy} } r(u) }{2}
  \label{eq:index_threshold}
\end{equation}
where $r(u)$ denotes the flow ratio associated to the velocity field $u$, as defined in \eqref{eq:flow_ratio}. In our data-base, we obtain, $r^*=1.25$, so any patient for which $r > 1.25$ will be considered as presenting high blockage risk. Note that the approximation is very close to the real value for moderate values of $r$ regardless of the image modality. However, we tend to overestimate the value for $r>1.7$. In presence of a blockage, $r$ becomes significantly larger than one so the overestimation is by far more preferable than an underestimation. Indeed, the overestimation makes our method conservative and, in the worst case, we will conclude with a false positive. However, the method will not lead to a false negative diagnosis, which would leave a sick patient without treatment/surgery. 

\begin{figure}
  \centering
  \includegraphics[height = 5 cm]{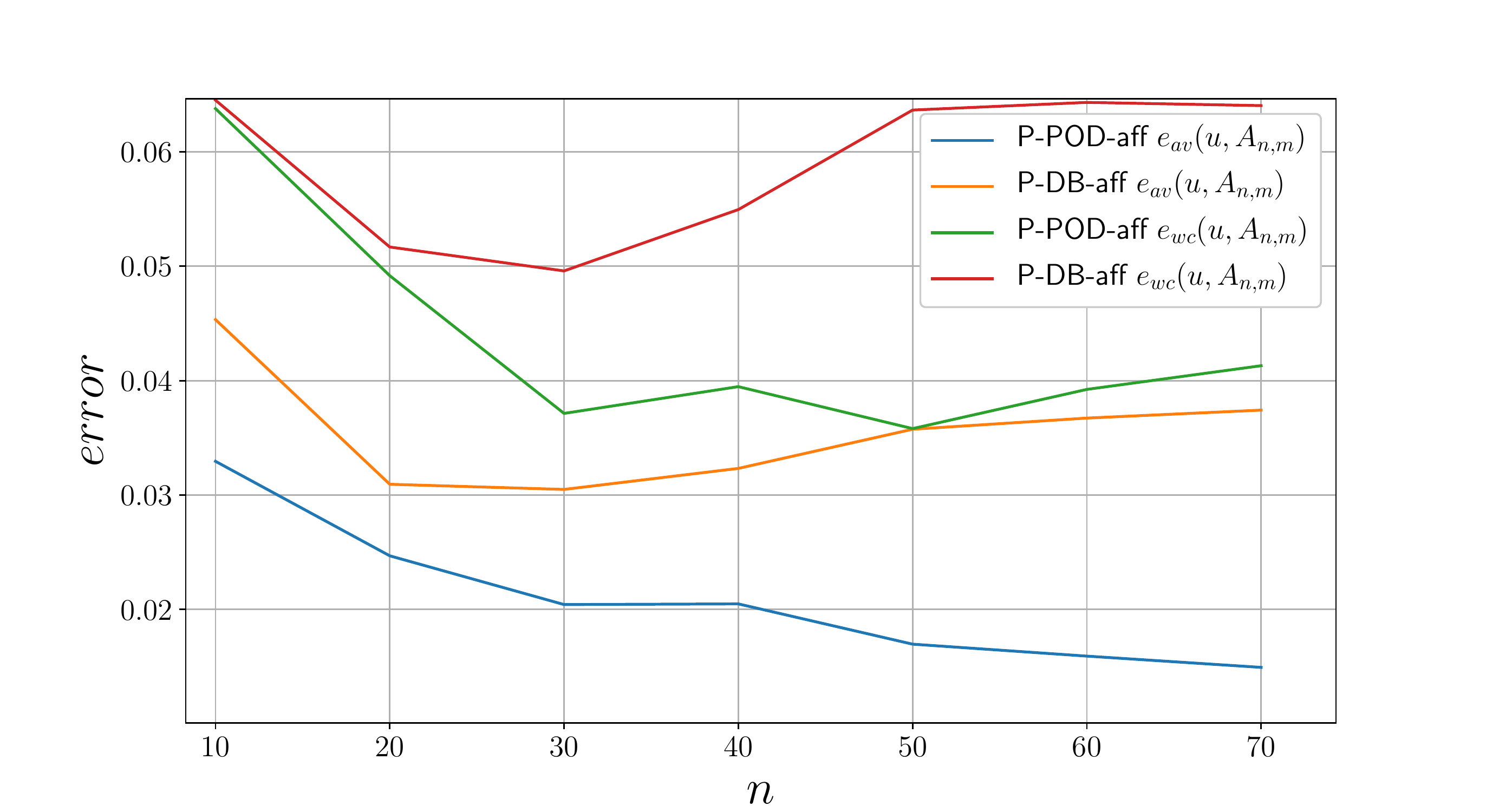}
  \caption{Average and worst reconstruction errors as a function of $n$.}
  \label{fig:error_blockage_CFI_max_and_avg}
\end{figure}

\begin{figure}
  \centering
  \subfigure[CFI]{ 
    \includegraphics[height = 5 cm]{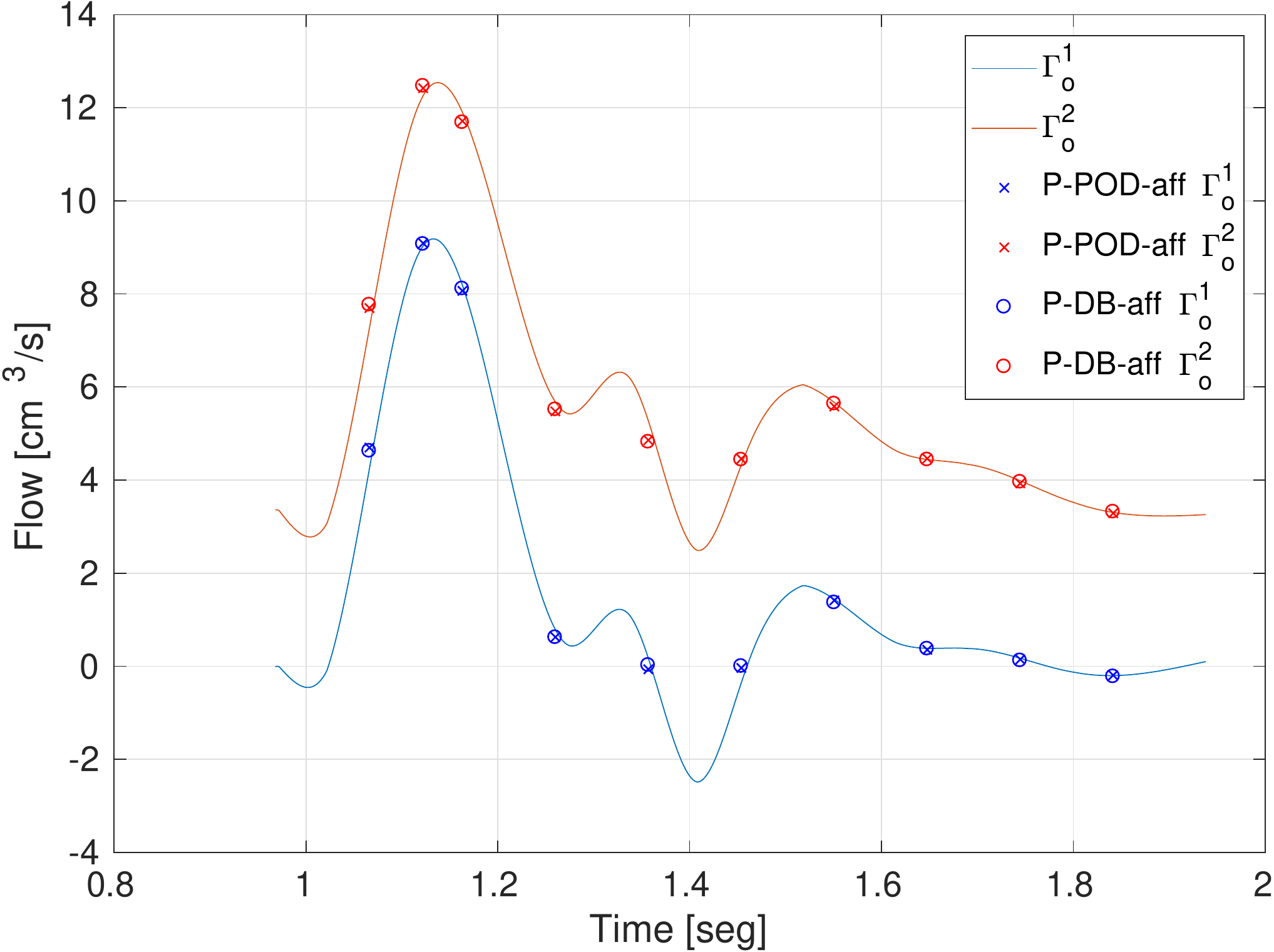}}
   \subfigure[VFI]{ 
    \includegraphics[height = 5 cm]{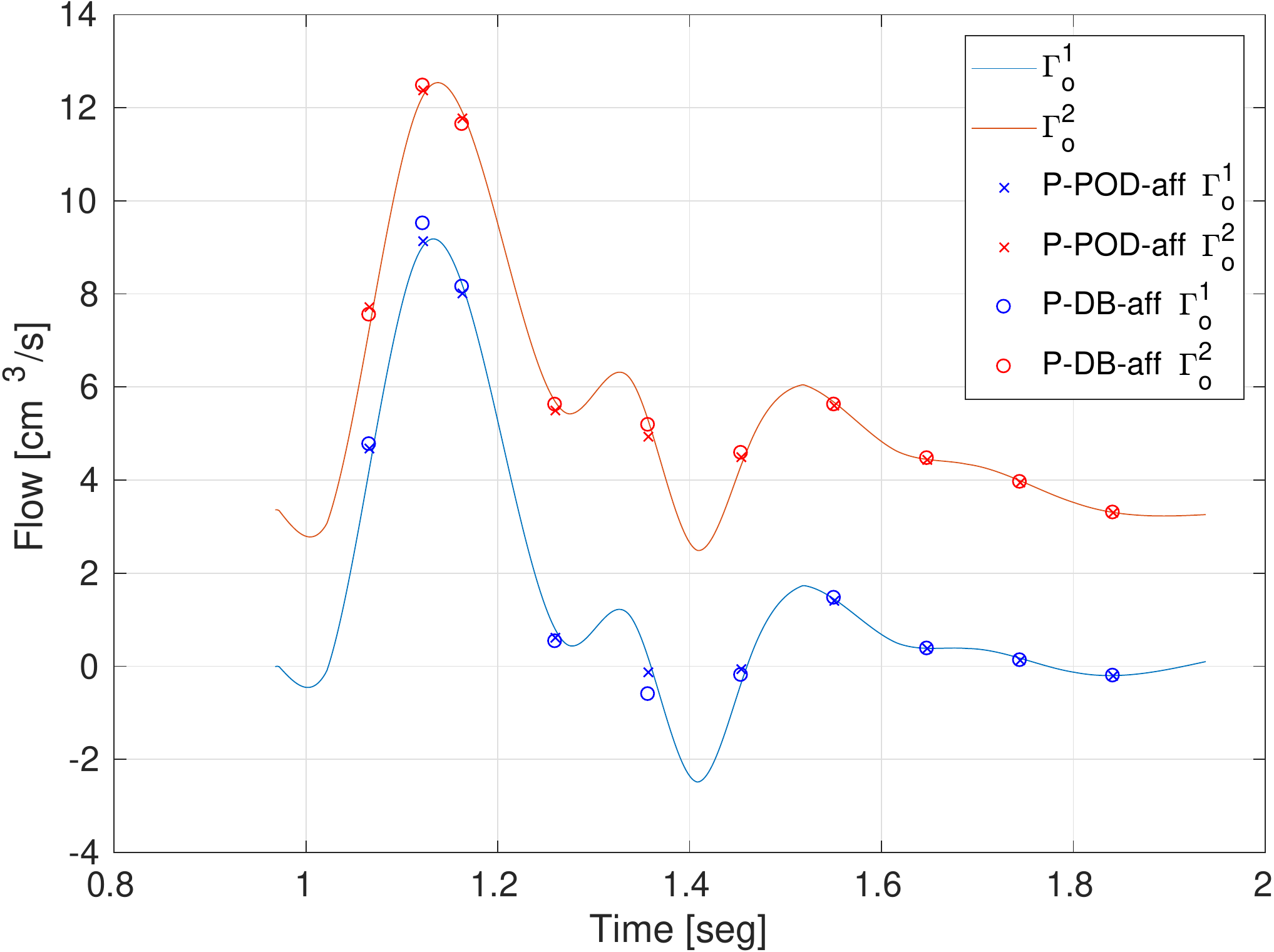}}
    \caption{Comparison of simulated and reconstructed flow at the outlets of the domain.}
    \label{fig:flow_reconstruction}
\end{figure}

\begin{figure}
  \centering
  \subfigure[CFI]{ 
    \includegraphics[height = 5 cm]{./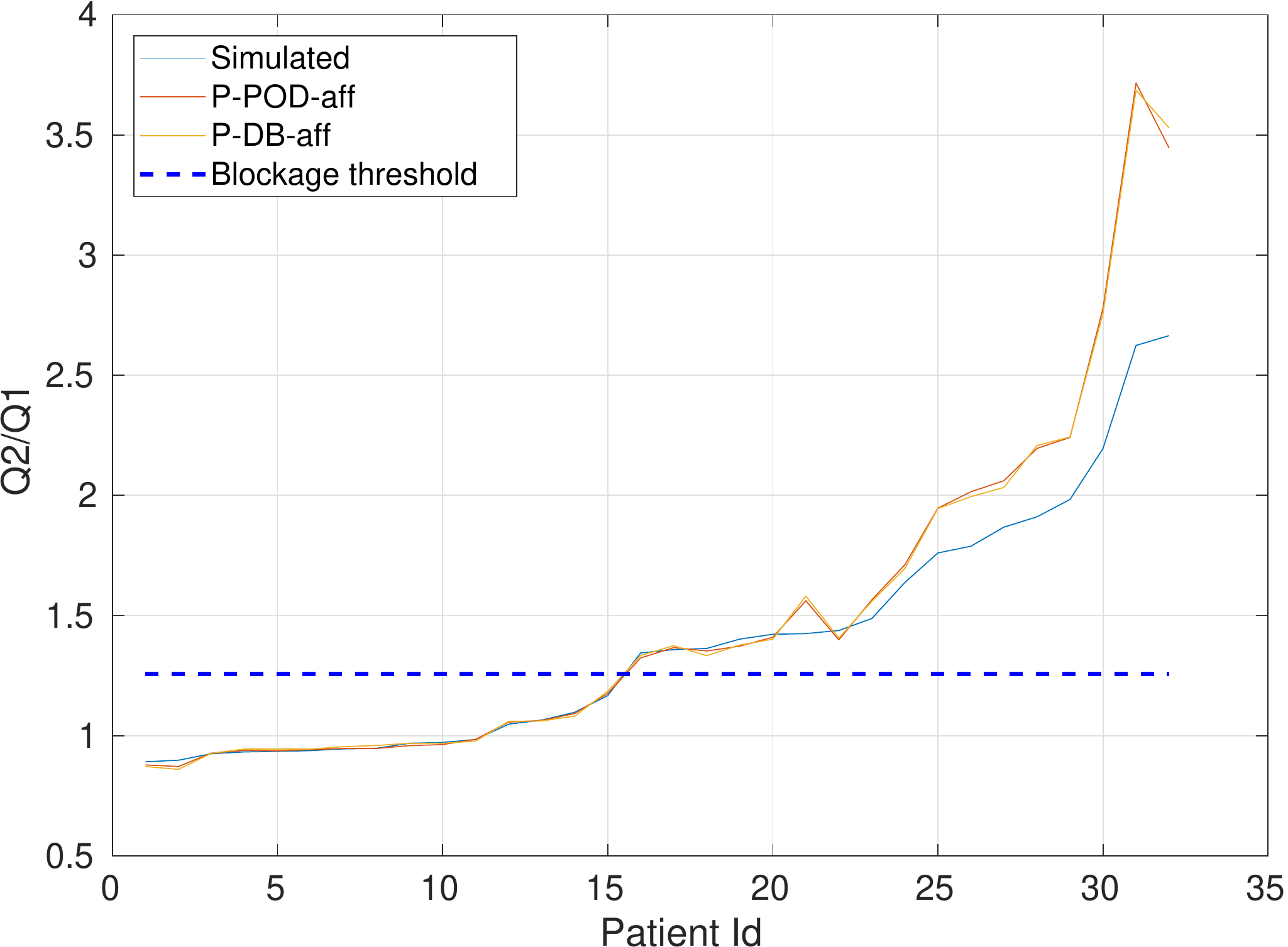}}
   \subfigure[VFI]{ 
    \includegraphics[height = 5 cm]{./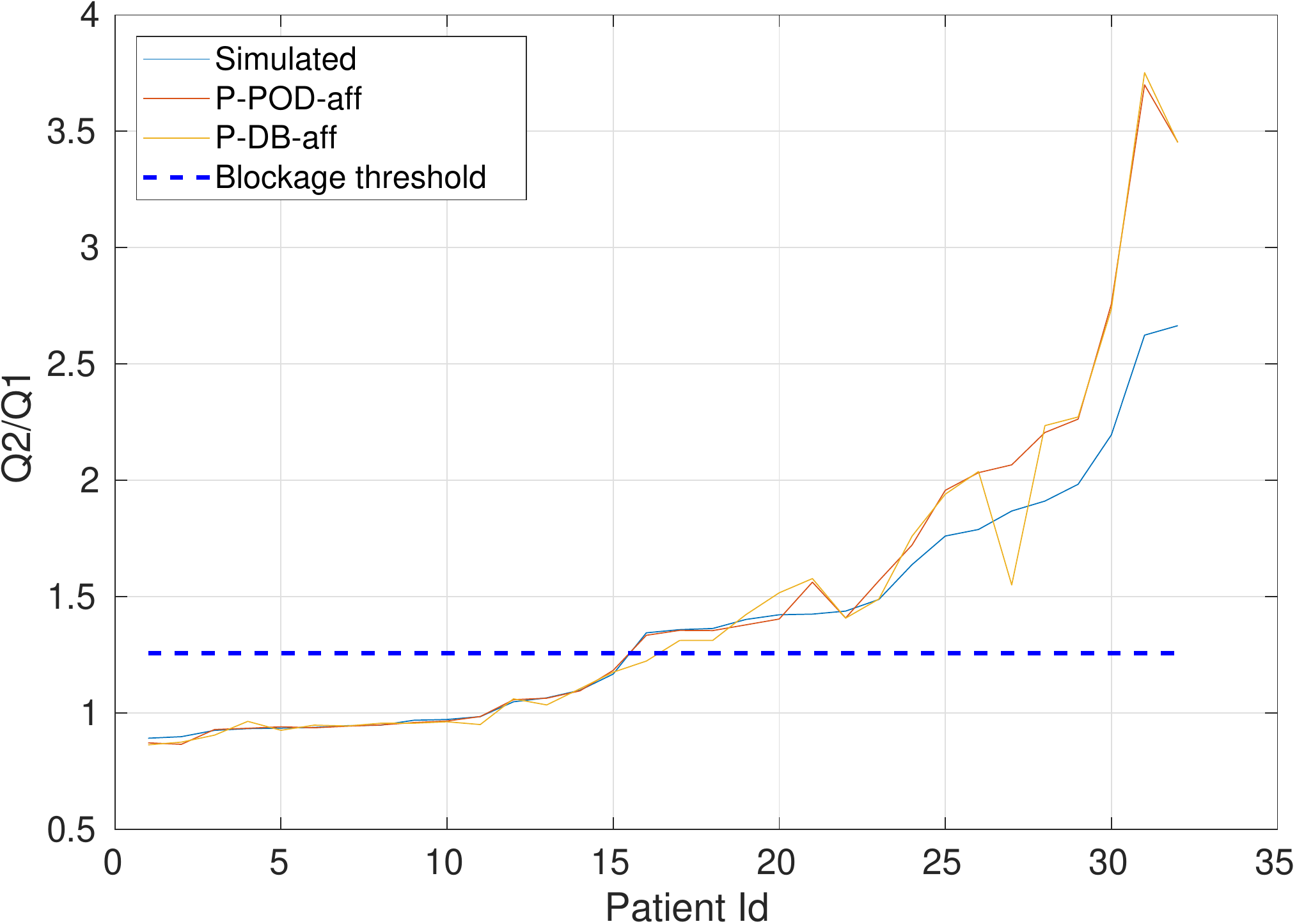}}
    \caption{Flow ratio reconstructed for blockage detection. In this test the number of modes is fixed to 30.}
    \label{fig:flow_ratio}
\end{figure}


\textcolor{felipe}{
To illustrate the reconstruction for both CFI and VFI, let us consider the forward simulation of a peak systole blood flow as depicted in Figure \ref{fig:block_GT}. Assuming this field as a ground truth, we can see the reconstruction with \ppa~and \pdba in Figures \ref{fig:blockage-image-CFI} and \ref{fig:blockage-image-VFI}.
}

\begin{figure}
  \centering
  \includegraphics[height = 7 cm, angle=90]{./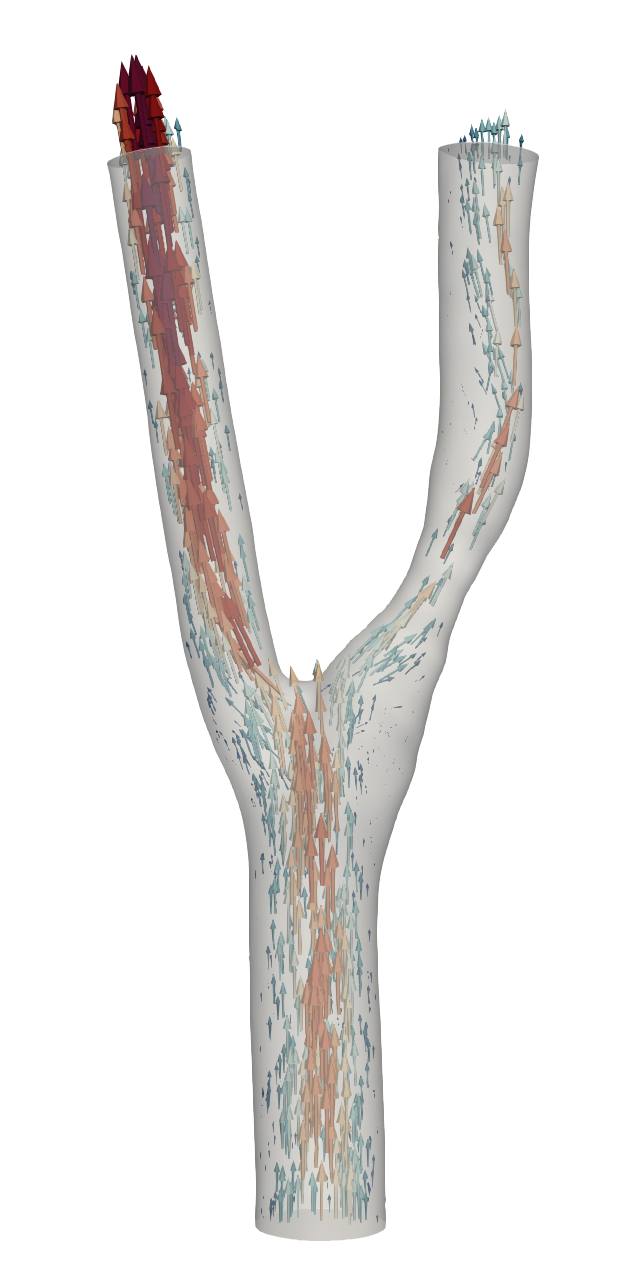} \\
  \includegraphics[height = 1 cm]{./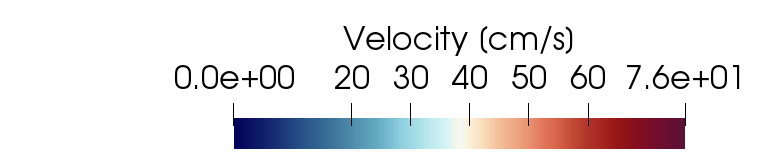}
  \caption{\textcolor{felipe}{Forward simulation and ground truth for one of our test-cases. This field belongs to the data base of patients with blockage.}}
  \label{fig:block_GT}
\end{figure}

\begin{figure}
	\centering
    \subfigure[$\omega$]{\includegraphics[height = 7 cm]{./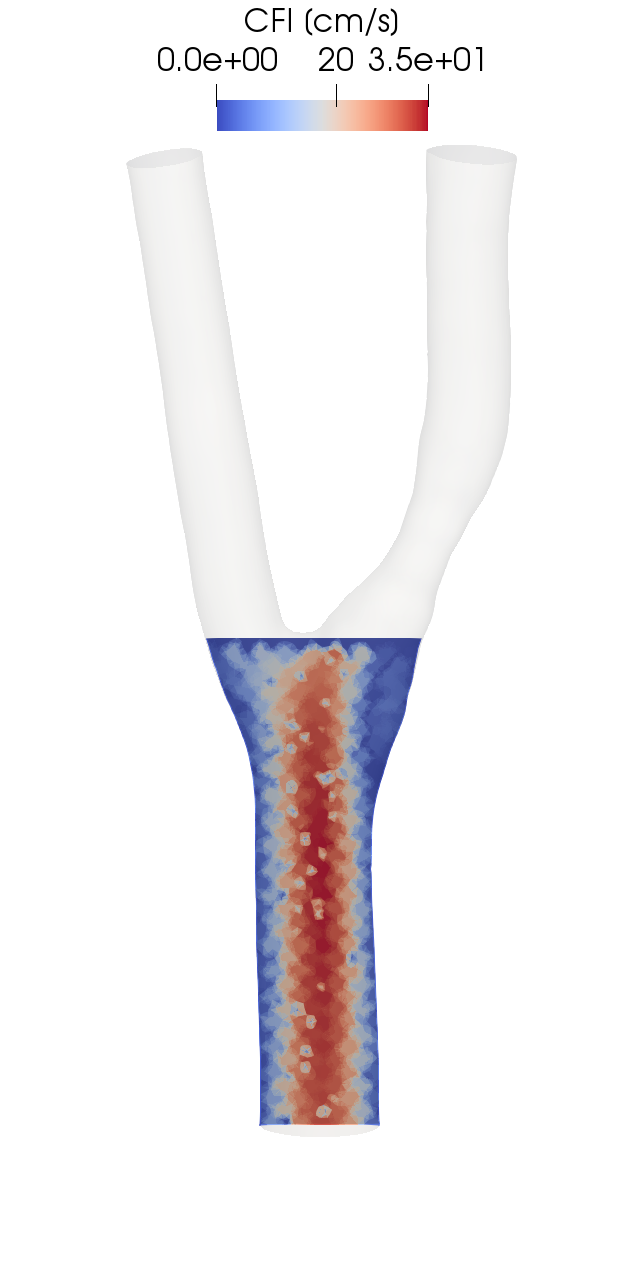} \label{fig:sint_blockage_CFI}}
    \subfigure[$A_{233,40}^{\text{pbdw}}(\omega)$]{\includegraphics[height = 7 cm]{./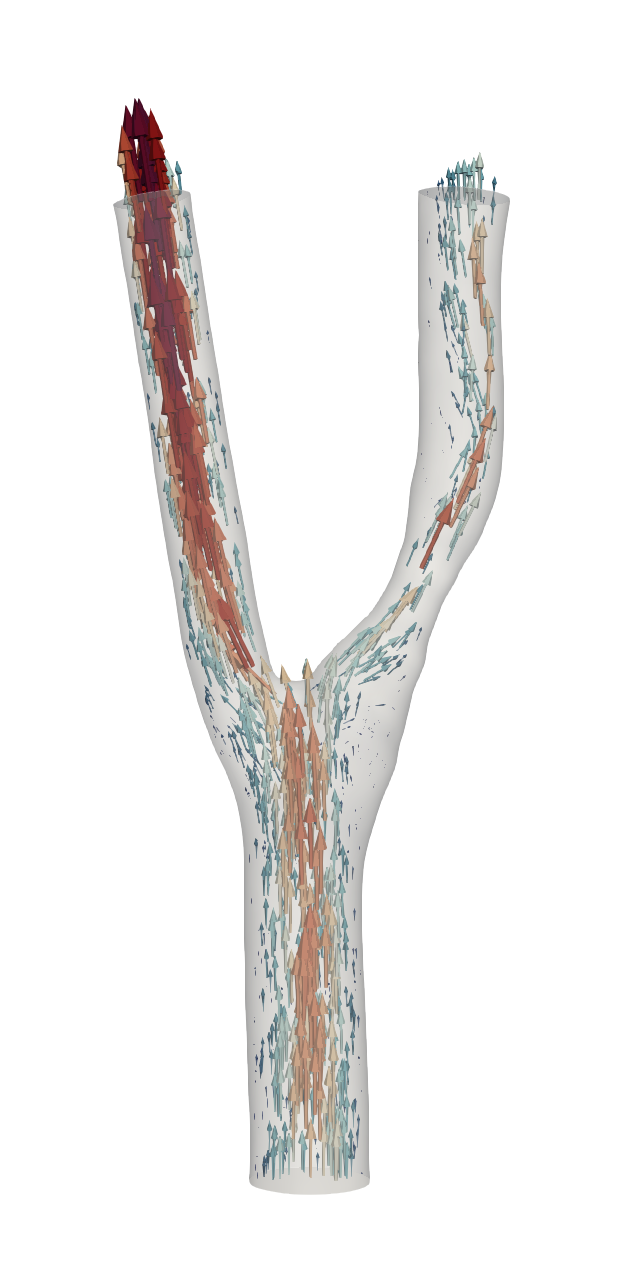}}
    \subfigure[$u - A_{233,40}^{\text{pbdw}}(\omega)$]{\includegraphics[height = 7 cm]{./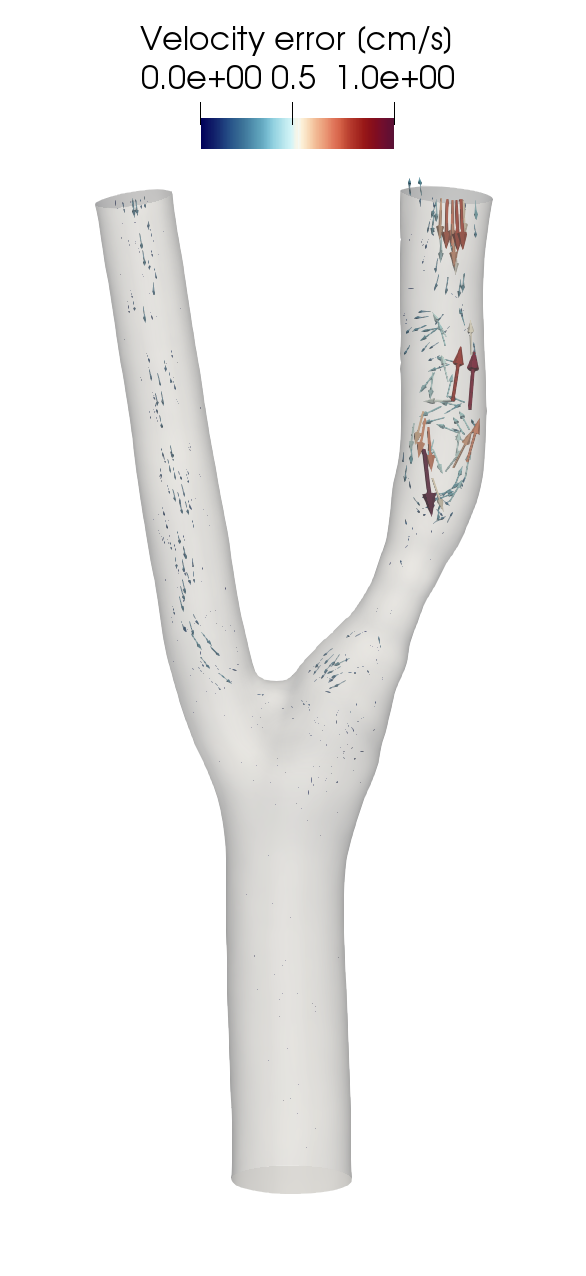}}
    \subfigure[$A_{233,40}^\dd(\omega)$]{\includegraphics[height = 7 cm]{./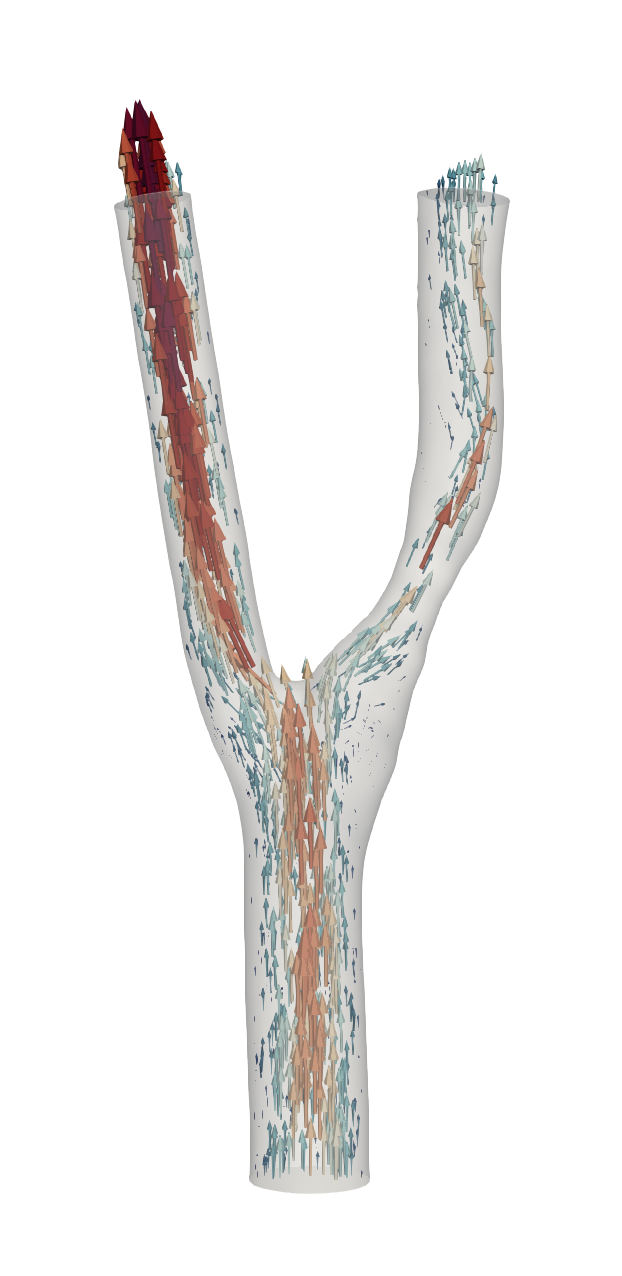}}
    \subfigure[$u - A_{233,40}^\dd(\omega)$]{\includegraphics[height = 7 cm]{./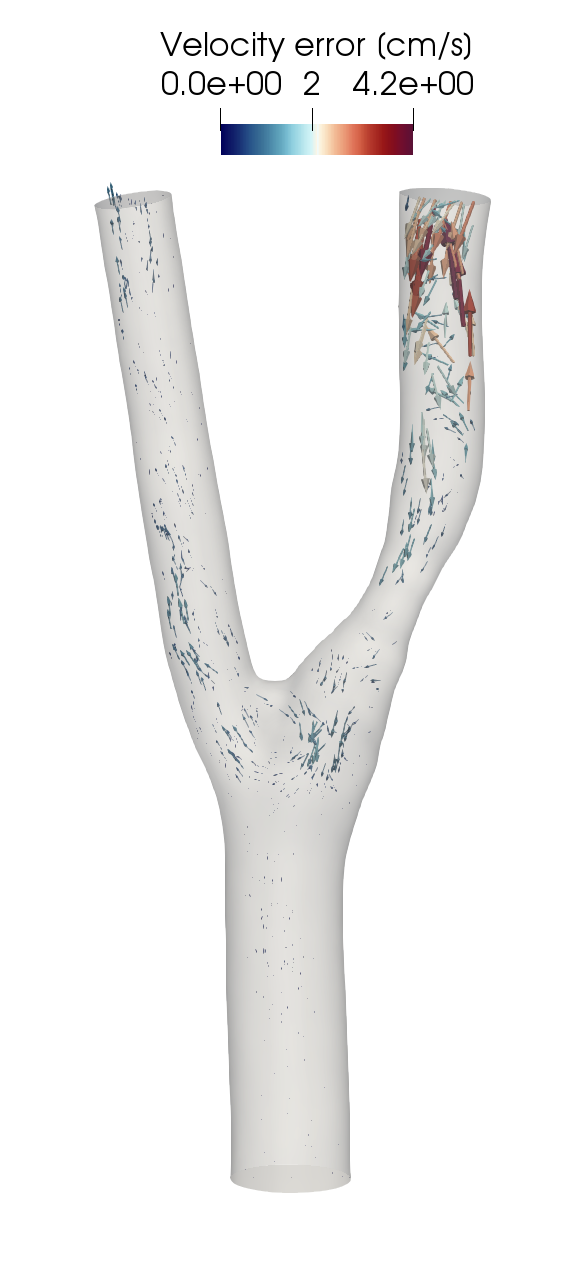}}\\
    \subfigure{\includegraphics[height = 1 cm]{./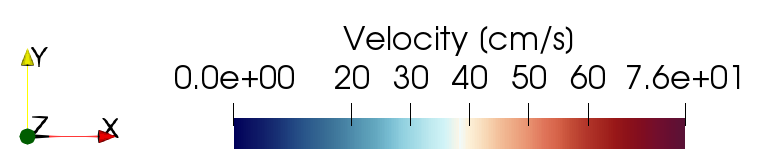}}
    \caption{\color{felipe}{Reconstruction example of a CFI partial observation at peak systole for blockage case using 40 modes. No flow information in the carotid branches is observed. Nevertheless, the algorithms are capable of reconstructing the velocity field in the whole computational domain.}}
    \label{fig:blockage-image-CFI}
\end{figure}

\begin{figure}
	\centering
    \subfigure[$\omega$]{\includegraphics[height = 7 cm]{./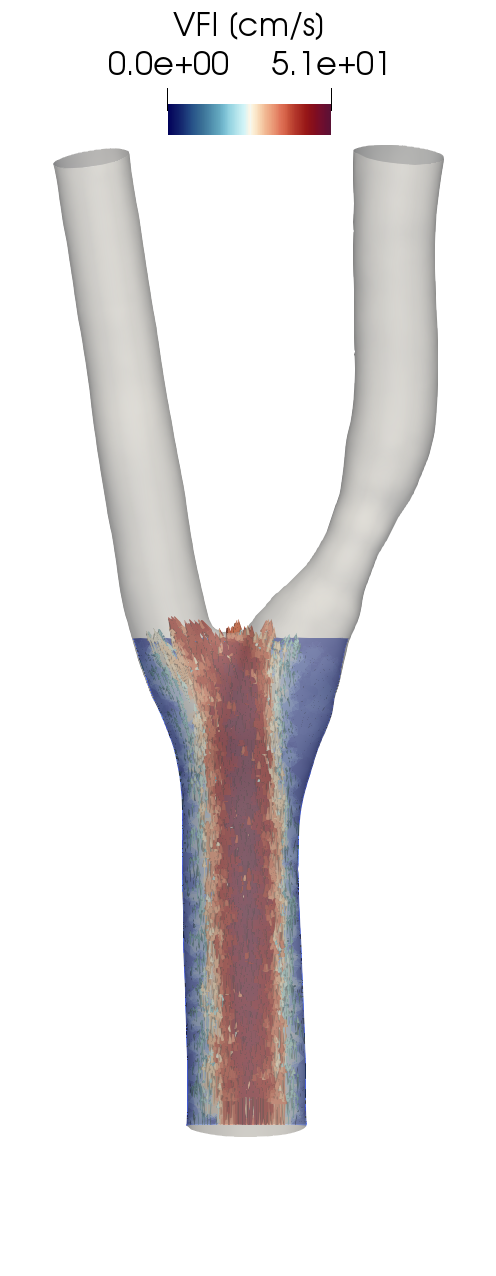} \label{fig:sint_blockage_CFI}} \hspace{.5cm}
    \subfigure[$A_{233,40}^{\text{pbdw}}(\omega)$]{\includegraphics[height = 7 cm]{./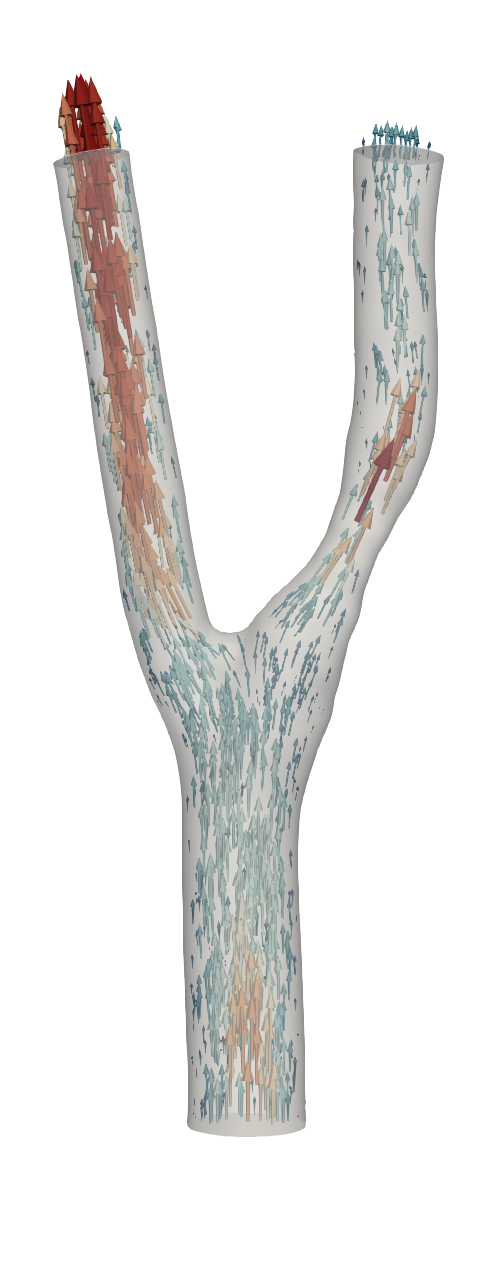}}\hspace{.5cm}
    \subfigure[$u - A_{233,40}^{\text{pbdw}}(\omega)$]{\includegraphics[height = 7 cm]{./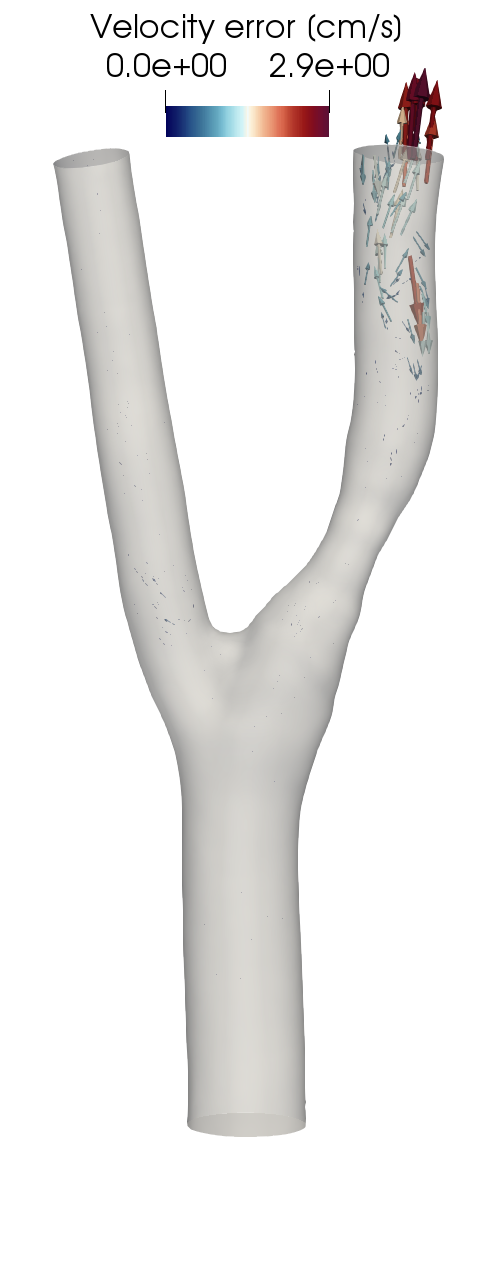}}\hspace{.5cm}
    \subfigure[$A_{233,40}^\dd(\omega)$]{\includegraphics[height = 7 cm]{./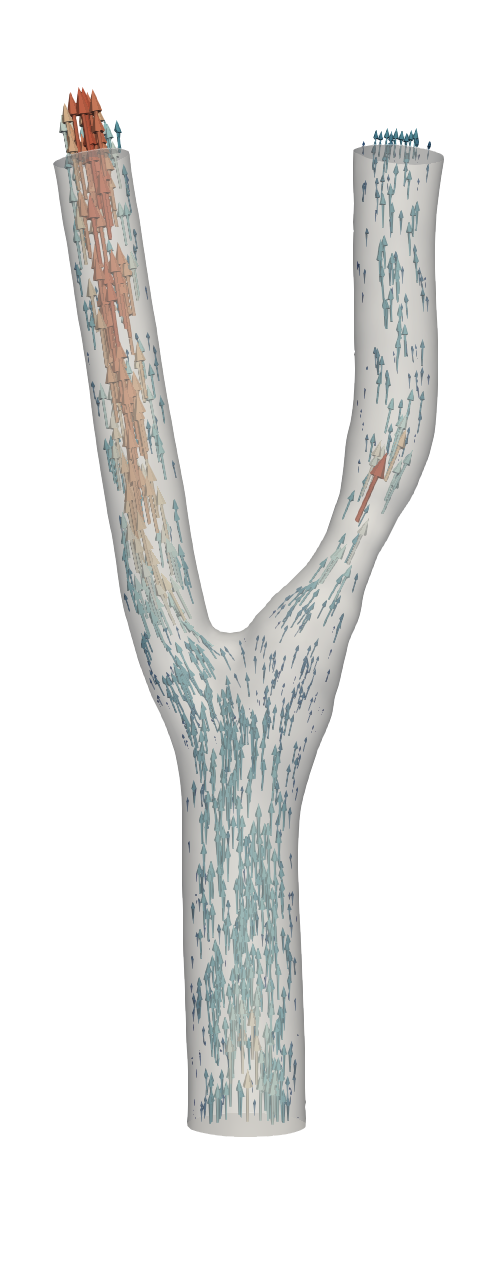}}\hspace{.5cm}
    \subfigure[$u - A_{233,40}^\dd(\omega)$]{\includegraphics[height = 7 cm]{./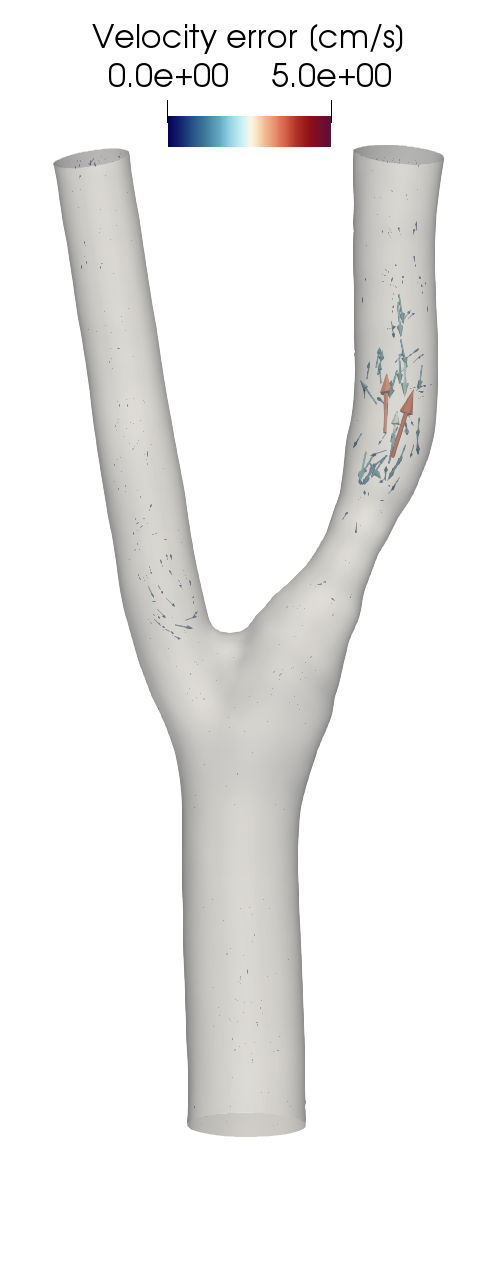}} \\
    \subfigure{\includegraphics[height = 1 cm]{./figs/field_CFI/colorbar.png}}
    \caption{\textcolor{felipe}{
Reconstruction example of a VFI partial observation with same information of that of Figure \ref{fig:blockage-image-CFI}. The method provides a slightly better reconstruction quality than that of the CFI image.}}
    \label{fig:blockage-image-VFI}
\end{figure}

\section{Conclusions and perspectives}
We have investigated the construction of state estimation techniques with different reduced models. We have compared their accuracy for the problem of reconstructing the blood  flow velocity field for which partial information is given by Doppler ultrasound images. The numerical tests were synthetically generated and mimic a real context in medical applications. The results show that the classical linear PBDW method with the classical POD reduced model can be outperformed by other models that are built more specifically for the reconstruction task. In particular, the nonlinear reconstruction method built by manifold partitioning and a local POD basis on each partition outperforms all the other bases choices. It presents a good trade-off between simplicity and efficiency and its accuracy is in some cases even ten times better than the classical PBDW. The data-driven approach gave less performant results than the one with local POD bases but superior to the classical approach. We think that the specific application may have had an important impact on the observed performance. Since we build a space $V_n(\omega)$ from the Doppler measurements $\omega$, they may not deliver enough information to learn a good reduced model, especially in the diastole phase where $\omega$ is close to zero. As a result, it seems important to investigate further the performance of this method in other reconstruction problems in order to gain better knowledge on its range of applicability.

\section*{Acknowledgments}
This research was partially supported by the Emergence Project  ``Models and Measurements'' of the Paris City Council. Also, authors thankfully acknowledge the financial support of the ANID Ph. D. Scholarship 72180473.

\section*{References}
\bibliographystyle{unsrt}
\bibliography{literature}

\end{document}